\newcommand{\R}{\mathbf{R}}
\newcommand {\E}{\mathrm{E}}
\renewcommand{\d}{\text{\rm d}}
\newcommand{\sG}{\mathcal{G}}
\newcommand{\sE}{\mathcal{E}}
\newcommand{\ve}{\epsilon}
\newtheorem{stat}{Statement}[section]
\newtheorem{proposition}[stat]{Proposition}
\newtheorem{corollary}[stat]{Corollary}
\newtheorem{theorem}[stat]{Theorem}
\newtheorem{lemma}[stat]{Lemma}
\theoremstyle{definition}
\numberwithin{equation}{section}
\begin{document}

\title{On the behaviour of stochastic heat equations on bounded domains.}

\author{Mohammud Foondun\thanks{Research supported in part by EPSRC}\\
\small Department of Mathematics\\[-0.8ex]
\small Loughborough University\\
\small Leicestershire, LE11 3TU, UK\\
\small \texttt{m.i.foondun@lboro.ac.uk}\\
\and
Eulalia Nualart\thanks{Research supported in part by the European Union programme FP7-PEOPLE-2012-CIG under grant agreement 333938}\\
\small Department of Economics and Business\\[-0.8ex]
\small Universitat Pompeu Fabra and Barcelona Graduate School of Economics\\[-0.8ex]
\small Ram\'on Trias Fargas 25-27, 08005
Barcelona, Spain\\
\small \texttt{eulalia@nualart.es}
}

\date{November, 2014}
\maketitle
\begin{abstract}
Consider the following equation $$\partial_t u_t(x)=\frac{1}{2}\partial _{xx}u_t(x)+\lambda \sigma(u_t(x))\dot{W}(t,\,x)$$ on an interval.	Under Dirichlet boundary condition, we show that in the long run, the second moment of the solution grows exponentially fast if $\lambda$ is large enough. But if $\lambda$ is small, then the second moment eventually decays exponentially.  If we replace the Dirichlet boundary condition by the Neumann one, then the second moment grows exponentially fast no matter what $\lambda$ is.  We also provide various extensions.
		
\vskip .2cm \noindent{\it Keywords:}
		Stochastic partial differential equations, 
		\vskip .2cm
	\noindent{\it \noindent AMS 2000 subject classification:}
		Primary: 60H15; Secondary: 82B44.
		
\end{abstract}
\newpage

\section{Introduction and main results.}

Fix $L>0$ and consider the following stochastic heat equation on the interval $(0,L)$ with Dirichlet boundary condition:
\begin{equation}\label{eq:dirichlet}
\left|\begin{aligned}
&\partial_t u_t(x)=\frac{1}{2}\partial _{xx}u_t(x)+\lambda \sigma(u_t(x))\dot{W}(t,\,x)\;\; \text{for}\;\; 0<x<L\;\;\text{and}\; \;t>0\\
&u_t(0)=u_t(L)=0 \quad \text{for}\quad t>0,
\end{aligned}\right.
\end{equation}
where the initial condition $u_0:[0,L] \rightarrow \R_+$ is a non-negative bounded function with positive support inside the interval $[0,\,L]$. Here $\dot{W}$ denotes space-time white noise and $\sigma:\R\rightarrow \R$ is a globally Lipschitz function satisfying $l_\sigma|x|\leq  |\sigma(x)| \leq L_\sigma|x|$ where $l_\sigma$ and $L_\sigma$ are positive constants. 

The main aim of this paper is to investigate the long time behaviour of the solution to the above equation with respect to $\lambda$, a positive parameter which we call {\it the level of the noise}.  

As usual, we follow Walsh \cite{Walsh} to say that $u_t$ is a mild solution to \eqref{dirichlet} if \begin{equation}\label{mild:dirichlet}
u_t(x)=
(\sG_Du)_t(x)+ \lambda \int_0^L\int_0^t p_D(t-s,\,x,\,y)\sigma(u_s(y))W(\d s\,,\d y),
\end{equation}
where $p_D(t,x,y)$ denotes the Dirichlet heat kernel, and
\begin{equation*}
(\sG_D u)_t(x):=\int_0^L u_0(y)p_D(t,\,x,\,y)\,\d y.
\end{equation*}
See Walsh \cite{Walsh} or \cite{minicourse} for proofs of existence and uniqueness. Our first theorem reads as follows. 
\begin{theorem}\label{dirichlet}
There exists $\lambda_0>0$ such that for all $\lambda<\lambda_0$ and $x \in (0,L)$, 
\begin{equation*}
-\infty<\limsup_{t\rightarrow \infty}\frac{1}{t}\log \E|u_t(x)|^2<0.
\end{equation*}
On the other hand, for all $\ve>0$, there exists $\lambda_1>0$ such that for all $\lambda>\lambda_1$, and $ x\in[\ve, L-\ve]$,
\begin{equation*}
0<\liminf_{t\rightarrow \infty}\frac{1}{t}\log\E|u_t(x)|^2<\infty.
\end{equation*}
\end{theorem}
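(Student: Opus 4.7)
My plan is to start from the mild formulation \eqref{mild:dirichlet} and pass to second moments via the It\^o isometry, turning the SPDE into the integral identity
\begin{equation*}
J(t,x) := \E|u_t(x)|^2 = |(\sG_D u)_t(x)|^2 + \lambda^2 \int_0^t\!\!\int_0^L p_D(t-s,x,y)^2\,\E|\sigma(u_s(y))|^2\,dy\,ds.
\end{equation*}
Combined with $l_\sigma|z|\leq|\sigma(z)|\leq L_\sigma|z|$, this produces matching linear upper and lower integral inequalities for $J$. Because of the Dirichlet condition the Laplacian has a strictly positive spectral gap $\mu_1=\pi^2/(2L^2)$, with $L^2$-normalised first eigenfunction $\phi_1(x)=\sqrt{2/L}\sin(\pi x/L)$, and the whole argument plays the factors $\lambda^2$ and $e^{-2\mu_1 t}$ against each other.

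For the small-$\lambda$ upper half I would use the Chapman--Kolmogorov identity $\int_0^L p_D(r,x,y)^2\,dy = p_D(2r,x,x)$ together with the eigenfunction expansion to get $\sup_x p_D(2r,x,x)\leq C\min(r^{-1/2},e^{-2\mu_1 r})$, which is integrable against $e^{\beta r}$ for any $\beta<2\mu_1$. Fix such a $\beta$, set $M(t):=\sup_{x\in(0,L)}e^{\beta t}J(t,x)$, and substitute $J(s,y)\leq e^{-\beta s}\sup_{s'\leq t}M(s')$ into the upper inequality after multiplying by $e^{\beta t}$. This yields
\begin{equation*}
M(t)\leq C_0 + \lambda^2 L_\sigma^2\, C_\beta \sup_{s\leq t}M(s),\qquad C_\beta:=\int_0^\infty e^{\beta r}\sup_x p_D(2r,x,x)\,dr<\infty,
\end{equation*}
so that choosing $\lambda_0$ with $\lambda_0^2 L_\sigma^2 C_\beta<1$ forces $M$ to stay bounded in $t$, hence $\limsup t^{-1}\log J(t,x)\leq -\beta<0$. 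The reverse bound $\limsup>-\infty$ is free, because $J(t,x)\geq|(\sG_Du)_t(x)|^2\geq c\phi_1(x)^2 e^{-2\mu_1 t}$ from the leading eigenfunction term in the deterministic semigroup.

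For the large-$\lambda$ lower half the key input is
\begin{equation*}
p_D(r,x,y)\geq \tfrac12 e^{-\mu_1 r}\phi_1(x)\phi_1(y),\qquad r\geq T_0,\ x,y\in[\ve,L-\ve],
\end{equation*}
obtained from the eigenfunction expansion by making the tail $\sum_{n\geq 2}e^{-\mu_n r}\phi_n(x)\phi_n(y)$ uniformly subdominant on the compact square. Squaring and inserting into the lower integral inequality for $J$ gives, for $x\in[\ve,L-\ve]$,
\begin{equation*}
J(t,x)\geq \tfrac14\lambda^2 l_\sigma^2\,\phi_1(x)^2 \int_0^{t-T_0} e^{-2\mu_1(t-s)}\, h(s)\,ds,\qquad h(s):=\int_\ve^{L-\ve}\phi_1(y)^2 J(s,y)\,dy.
\end{equation*}
Multiplying this by $\phi_1(x)^2\1_{[\ve,L-\ve]}(x)$ and integrating in $x$ collapses the two-variable inequality to the scalar Volterra inequality
\begin{equation*}
h(t)\geq H_0(t)+A\int_{T_0}^{t} e^{-2\mu_1 r}\,h(t-r)\,dr,\qquad A:=\tfrac{\lambda^2 l_\sigma^2}{4}\int_\ve^{L-\ve}\phi_1(x)^4\,dx,
\end{equation*}
with strictly positive forcing $H_0(t)=\int_\ve^{L-\ve}\phi_1(x)^2|(\sG_Du)_t(x)|^2\,dx$ (this uses the positivity hypothesis on $u_0$). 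The Malthusian exponent $\gamma$ of the associated renewal equation solves $\gamma+2\mu_1=A\,e^{-(2\mu_1+\gamma)T_0}$ and is strictly positive as soon as $Ae^{-2\mu_1 T_0}>2\mu_1$, i.e.\ once $\lambda$ is large enough; a standard comparison for positive-kernel Volterra inequalities then yields $h(t)\geq c e^{\gamma t}$ for large $t$, and re-inserting into the displayed inequality produces $J(t,x)\geq c' e^{\gamma t}$ uniformly on $[\ve,L-\ve]$. The complementary $\limsup<\infty$ is a routine Gr\"onwall estimate on the upper inequality, the singular piece $r^{-1/2}\wedge 1$ of the kernel being integrable so that at worst $J$ grows exponentially.

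The main obstacle, in my view, is the reduction carried out in the growth half: collapsing the two-variable inequality for $J$ into a one-dimensional renewal inequality sharp enough to detect a positive Lyapunov exponent. Any $(x,y)$-uniform lower bound on $p_D(r,x,y)^2$ discards the rank-one large-time structure and produces no growth at all; the weight $\phi_1^2\,\1_{[\ve,L-\ve]}$ is designed precisely to extract that rank-one component. The sole technical point is the uniform control of the subleading eigenfunction tail on $[\ve,L-\ve]^2$, which is standard but deserves some care.
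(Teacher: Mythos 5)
Your proposal is correct in substance, and its two halves relate to the paper differently. The small-$\lambda$ half is essentially the paper's own argument: the same second-moment identity, the same Chapman--Kolmogorov reduction $\int_0^L p_D^2(r,x,y)\,\d y=p_D(2r,x,x)$, and the same weighted-norm fixed-point inequality (the paper's $\|u\|_{2,\beta}$, bounded via Lemmas \ref{upperbound-1} and \ref{upperbound-2}); your running supremum $\sup_{s\le t}M(s)$ is in fact a slightly more careful way to close that loop, since it makes the a priori finiteness explicit. Two small slips there: the bound $\sup_x p_D(2r,x,x)\le C\min(r^{-1/2},e^{-2\mu_1 r})$ is mis-stated (for small $r$ the minimum is the exponential, which fails near $r=0$); what you need, and what the paper's Lemma \ref{upperbound-1} encodes, is $r^{-1/2}$ for small $r$ and $e^{-2\mu_1 r}$ for large $r$, and the integrability conclusion for $\beta<2\mu_1$ is unaffected. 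The large-$\lambda$ half is where you genuinely diverge. The paper bounds $p_D(t,x,y)$ from below \emph{uniformly} on $[\ve,L-\ve]^2$ by $c_\ve e^{-\mu_1 t}$ (Lemma \ref{lowerbound}), takes spatial infima, and shows that the Laplace transform $I_\beta=\int_0^\infty e^{-\beta t}\inf_x\E|u_t(x)|^2\,\d t$ is infinite once a supercriticality condition on $\lambda$ holds; you instead keep the rank-one structure, project onto $\phi_1^2\1_{[\ve,L-\ve]}$, and run a supercritical renewal/Malthusian argument for $h(t)=\int\phi_1^2\,\E|u_t|^2$. Your route buys a genuine pointwise-in-time bound $\E|u_t(x)|^2\ge c\,e^{\gamma t}$ on $[\ve,L-\ve]$, which delivers the $\liminf$ directly, whereas the paper's divergence of $I_\beta$ is a weaker (Abelian) statement and leaves the passage to a positive $\liminf$ more implicit; the price is that you must justify the Volterra comparison (local boundedness of $t\mapsto\sup_x\E|u_t(x)|^2$ from the existence theory, positivity of the forcing $H_0$, and the standard supercritical renewal asymptotics), which you only sketch, at roughly the same level of detail as the paper's own write-up. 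One remark of yours should be corrected: a uniform-in-$(x,y)$ lower bound on the sub-square does \emph{not} ``produce no growth at all'' --- it still carries the sharp rate $e^{-\mu_1 t}$ with an $\ve$-dependent constant, and this is exactly what the paper uses; the $\phi_1$-weighting is an elegant organizing device, not a necessity. Finally, your closing $\liminf<\infty$ and $\limsup>-\infty$ steps coincide with the paper's (Gaussian domination plus exponential decay of $(\sG_Du)_t$), and like the paper you leave the former as a routine renewal/Gr\"onwall estimate.
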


In \cite{Khoshnevisan:2013ab}, the authors looked at the behaviour of the solution to \eqref{dirichlet} as $\lambda$ gets large.  This project grew out of trying to understand the behaviour of the solution when one keeps $\lambda$ fixed but take $t$ to be large.  So our results complement those in \cite{Khoshnevisan:2013ab}. We should mention that we could not get precise estimate for $\lambda_0$ and $\lambda_1$, although our proof shows that they  depend on the first eigenvalue of the Dirichlet Laplacian; a fact which is not surprising.  However, we have the following result.
\begin{corollary}\label{zero}
We have $\lambda_0\leq \lambda_1$. Moreover,
 there exist $\bar{\lambda} \in [\lambda_0, \lambda_1]$ such that
\begin{equation*}
\limsup_{t\rightarrow \infty}\frac{1}{t}\log\E|u_t(x)|^2=0,
\end{equation*}
and $\tilde{\lambda} \in [\lambda_0, \lambda_1]$ such that
\begin{equation*}
\liminf_{t\rightarrow \infty}\frac{1}{t}\log\E|u_t(x)|^2=0.
\end{equation*}
Moreover, $\bar{\lambda} \leq \tilde{\lambda}$.
\end{corollary}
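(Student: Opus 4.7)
My plan has three steps. First, for $\lambda_0 \leq \lambda_1$: argue by contradiction. If $\lambda_1 < \lambda_0$, pick any $\lambda \in (\lambda_1,\lambda_0)$ and any $x \in (0,L)$; choosing $\ve>0$ so that $x \in [\ve,L-\ve]$, both parts of Theorem~\ref{dirichlet} apply, giving simultaneously $\limsup_t t^{-1}\log\E|u_t(x)|^2 < 0$ and $\liminf_t t^{-1}\log\E|u_t(x)|^2 > 0$, contradicting $\liminf\leq\limsup$.

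Second, write $u^\lambda$ for the solution of \eqref{dirichlet} to emphasize the $\lambda$-dependence, and define the rate functions
\[
F(\lambda) := \limsup_{t\to\infty}\frac{1}{t}\log\E|u_t^\lambda(x)|^2,\qquad G(\lambda) := \liminf_{t\to\infty}\frac{1}{t}\log\E|u_t^\lambda(x)|^2.
\]
Walsh's isometry applied to \eqref{mild:dirichlet} gives
\[
\E|u_t^\lambda(x)|^2 = (\sG_D u_0)_t(x)^2 + \lambda^2\int_0^t\!\int_0^L p_D(t-s,x,y)^2\,\E[\sigma(u_s^\lambda(y))^2]\,\d s\,\d y,
\]
and combining this with $l_\sigma|u|\leq|\sigma(u)|\leq L_\sigma|u|$ and a standard Volterra comparison principle shows that $F$ and $G$ are non-decreasing in $\lambda$. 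Set $\bar\lambda := \inf\{\lambda > 0 : F(\lambda)\geq 0\}$ and $\tilde\lambda := \inf\{\lambda > 0 : G(\lambda)\geq 0\}$. Theorem~\ref{dirichlet} forces $\bar\lambda\geq\lambda_0$ and $\tilde\lambda\leq\lambda_1$, placing both in $[\lambda_0,\lambda_1]$; and the pointwise inequality $G\leq F$ gives $\{\lambda:G(\lambda)\geq 0\}\subseteq\{\lambda:F(\lambda)\geq 0\}$, hence $\bar\lambda\leq\tilde\lambda$.

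The main obstacle is to promote these bracketing inequalities to the equalities $F(\bar\lambda)=0$ and $G(\tilde\lambda)=0$: monotonicity alone only yields $F(\bar\lambda^-)\leq 0 \leq F(\bar\lambda^+)$ (and similarly for $G$) and does not rule out a jump over zero. To close this gap I would take Laplace transforms in $t$ in the linear Volterra upper and lower bounds obtained from the second-moment identity, identifying the two rates with the logarithms of the spectral radii of integral operators with kernels of the form $\lambda^2 c \int_0^\infty e^{-\beta t} p_D(t,x,y)^2\,\d t$, which depend continuously and strictly monotonically on $\lambda$. This continuity transfers to $F$ and $G$ via a sandwich, and an intermediate value argument then delivers the desired equalities.
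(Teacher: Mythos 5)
Your skeleton matches the paper's proof in outline: the contradiction argument for $\lambda_0\leq\lambda_1$, the definition of $\bar\lambda,\tilde\lambda$ as critical values of the rate functions, and the ordering $\bar\lambda\leq\tilde\lambda$ from $G\leq F$ are all as intended. The genuine gap is in how you obtain monotonicity of $F$ and $G$ in $\lambda$. The displayed second-moment formula is \emph{not} a closed equation for $\E|u_t^\lambda(x)|^2$, since it involves $\E[\sigma(u_s^\lambda(y))^2]$, which is only comparable to $\E|u_s^\lambda(y)|^2$ up to the constants $l_\sigma^2$ and $L_\sigma^2$. A Volterra comparison between the solutions at levels $\lambda_1\geq\lambda_2$ therefore pits a lower inequality with kernel $\lambda_1^2 l_\sigma^2\, p_D^2$ against an upper inequality with kernel $\lambda_2^2 L_\sigma^2\, p_D^2$, and yields $\E|u_t^{\lambda_1}(x)|^2\geq\E|u_t^{\lambda_2}(x)|^2$ only when $\lambda_1 l_\sigma\geq\lambda_2 L_\sigma$ — not for all $\lambda_1\geq\lambda_2$ unless $\sigma$ is exactly linear. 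This is precisely why the paper proves Proposition \ref{monotone} by a completely different route: it discretizes \eqref{eq:dirichlet} into a system of interacting SDEs, invokes the comparison theorem of Cox et al.\ (Theorem 1 of \cite{cox}) to get monotonicity of every moment in $\lambda$ for each fixed $t$, and then passes to the limit. That ingredient (or an equivalent honest proof of pointwise monotonicity) is indispensable and cannot be extracted from Walsh's isometry plus the two-sided growth bound on $\sigma$.

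Your third step also does not close as written. The Laplace-transform/spectral-radius identification of the exponential rate would require a closed renewal equation for the second moment; with general $\sigma$ you only have an upper Volterra inequality with constant $L_\sigma$ and a lower one with $l_\sigma$, so $F(\lambda)$ and $G(\lambda)$ are sandwiched between two \emph{different} continuous functions of $\lambda$, which gives neither continuity of $F,G$ nor an intermediate value argument. (Your underlying concern — that a nondecreasing rate function could in principle jump over the value $0$ — is a fair one, but the paper's own proof does not proceed through any such continuity analysis: it consists exactly of the contradiction remark plus an appeal to Proposition \ref{monotone} and the sign information from Theorem \ref{dirichlet}.) So the two concrete defects to repair are: replace the Volterra-comparison claim of monotonicity by the discretization/Cox comparison argument of Proposition \ref{monotone}, and drop or substantially rework the spectral-radius continuity step, which does not deliver what you ask of it.
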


We have another corollary of the above theorem, but first we need a definition which we borrow from \cite{Khoshnevisan:2013ab}. We define the energy of the solution $u$ as 
\begin{equation}\label{energy}
\sE_t(\lambda):=\sqrt{\E \Vert u_t \Vert^2_{L^2(0,L)}}.
\end{equation}
Our corollary is the following.
\begin{corollary}\label{cor:dirichlet}
Let $\lambda_0$ and $\lambda_1$ as in Theorem \ref{dirichlet}. Then 
\begin{equation*}
-\infty<\limsup_{t\rightarrow \infty}\frac{1}{t}\log \sE_t(\lambda)<0 \quad\text{for\,\,all}\quad \lambda< \lambda_0,
\end{equation*}
and 
\begin{equation*}
0<\liminf_{t\rightarrow \infty}\frac{1}{t}\log \sE_t(\lambda)<\infty \quad\text{for\,\,all}\quad \lambda> \lambda_1.
\end{equation*}
\end{corollary}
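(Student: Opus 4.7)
The plan is to integrate the mild formulation \eqref{mild:dirichlet} in the space variable and combine the resulting identity with the pointwise estimates supplied by Theorem \ref{dirichlet}. Walsh's isometry applied to \eqref{mild:dirichlet}, together with the semigroup identity $\int_0^L p_D(t-s,x,y)^2\,\d x = p_D(2(t-s),y,y)$, gives the exact representation
\begin{equation*}
\sE_t^2(\lambda) = \|(\sG_D u)_t\|_{L^2(0,L)}^2 + \lambda^2\int_0^t\!\int_0^L p_D(2(t-s),y,y)\,\E[\sigma(u_s(y))^2]\,\d y\,\d s,
\end{equation*}
which will drive the upper bounds, while the pointwise bounds from the theorem will drive the lower bounds.

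The two outer (``finite'') inequalities are short. For $\limsup t^{-1}\log\sE_t(\lambda) > -\infty$, Jensen yields $\E|u_t(x)|^2 \geq ((\sG_D u)_t(x))^2$, so $\sE_t^2(\lambda) \geq \|(\sG_D u)_t\|_{L^2(0,L)}^2 \sim \ip{u_0}{\phi_1}^2\,e^{-\mu_1 t}$ with $\mu_1 = \pi^2/L^2$; the inner product is strictly positive since $u_0$ has positive support. For $\liminf t^{-1}\log\sE_t(\lambda) < \infty$, I would use the identity above with the upper Lipschitz bound $\sigma(u)^2 \leq L_\sigma^2 u^2$ and the free heat-kernel estimate $p_D(t,y,y) \leq C t^{-1/2}$, which gives a singular-kernel Volterra inequality for $\sE_t^2$; a Henry-type Gronwall lemma delivers $\sE_t^2(\lambda) \leq C_\lambda\, e^{\gamma(\lambda)t}$ with $\gamma(\lambda) < \infty$.

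The large-$\lambda$ inner bound $\liminf t^{-1}\log\sE_t(\lambda) > 0$ is a Fatou argument. Inspection of the proof of Theorem \ref{dirichlet} supplies a uniform lower bound $\liminf t^{-1}\log\E|u_t(x)|^2 \geq \gamma > 0$ for $x \in [\ve,L-\ve]$; fixing any $\gamma' \in (0,\gamma)$, we have $e^{-\gamma' t}\E|u_t(x)|^2 \to \infty$ pointwise, so Fatou gives $e^{-\gamma' t}\int_\ve^{L-\ve}\E|u_t(x)|^2\,\d x \to \infty$, whence $\sE_t^2(\lambda) \geq e^{\gamma' t}$ for large $t$ and $\liminf t^{-1}\log\sE_t(\lambda) \geq \gamma'/2$; letting $\gamma' \uparrow \gamma$ closes this direction.

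The main obstacle is the small-$\lambda$ inner bound $\limsup t^{-1}\log\sE_t(\lambda) < 0$, since Fatou cannot be used to bound a limsup from above. I would instead sharpen the on-diagonal estimate to $p_D(t,y,y) \leq C(1+t^{-1/2})\,e^{-\mu_1 t/2}$ (obtained by combining the free Gaussian bound at short times with the spectral decay from the Dirichlet eigenexpansion at long times) and rerun the singular-kernel Gronwall on the refined inequality for $\sE_t^2$. This produces $\sE_t^2(\lambda) \leq C e^{-\alpha(\lambda)t}$ with $\alpha(\lambda) > 0$ for all $\lambda$ below some threshold $\tilde\lambda_0$ determined by $\mu_1$, $L_\sigma$, and a universal constant. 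The final subtlety is aligning $\tilde\lambda_0$ with the $\lambda_0$ of Theorem \ref{dirichlet}; since that theorem is presumably proved by the same heat-kernel-driven Gronwall technique applied pointwise, the two thresholds should coincide, and otherwise one simply replaces $\lambda_0$ by $\min(\lambda_0,\tilde\lambda_0)$ without altering the statement.
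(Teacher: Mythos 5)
Your outer bounds and the large-$\lambda$ lower bound are essentially workable, but the small-$\lambda$ direction has a structural gap. The corollary fixes $\lambda_0$ to be the threshold of Theorem \ref{dirichlet} and asserts decay of $\sE_t(\lambda)$ for \emph{all} $\lambda<\lambda_0$. Your plan is to rerun a singular Gronwall argument at the level of $\sE_t^2$ using a refined on-diagonal bound for $p_D(2t,y,y)$; this produces decay only for $\lambda$ below a new threshold $\tilde\lambda_0$ determined by the constants of that argument, and nothing forces $\tilde\lambda_0\geq\lambda_0$. Your proposed remedy of replacing $\lambda_0$ by $\min(\lambda_0,\tilde\lambda_0)$ does alter the statement: it proves a strictly weaker corollary. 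The repair is immediate and is exactly what the paper does: the proof of Theorem \ref{dirichlet} already establishes, for every $\lambda<\lambda_0$ and some $\beta>0$, the uniform bound $\|u\|_{2,\beta}=\sup_{t>0}\sup_{x\in(0,L)}e^{\beta t}\E|u_t(x)|^2<\infty$, hence $\sE_t^2(\lambda)\leq L\sup_{x\in(0,L)}\E|u_t(x)|^2\leq c\,e^{-\beta t}$ with the \emph{same} $\lambda_0$, and no new Gronwall iteration or threshold is needed.

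More generally, the paper's entire proof is the sandwich $(L-2\ve)\inf_{x\in(\ve,L-\ve)}\E|u_t(x)|^2\leq\int_0^L\E|u_t(x)|^2\,\d x\leq L\sup_{x\in(0,L)}\E|u_t(x)|^2$, combined with what the proof of Theorem \ref{dirichlet} actually controls: the supremum through $\|u\|_{2,\beta}$ for $\lambda<\lambda_0$, and the infimum through $I_\beta$ for $\lambda>\lambda_1$. Your Walsh-isometry identity for $\sE_t^2$ and the Volterra bound are a legitimate alternative for the two ``finite'' inequalities (minor slip: $\|(\sG_Du)_t\|_{L^2(0,L)}^2\geq e^{-2\mu_1 t}\ip{u_0}{e_1}^2$, so the exponent is $2\mu_1$, which is harmless since $\sE_t$ is the square root). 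Your Fatou argument for $\liminf_{t\to\infty}t^{-1}\log\sE_t(\lambda)>0$ also works, but note it silently requires the pointwise exponential rate to be bounded below by a fixed $\gamma'>0$ on a set of $x$ of positive measure; this holds here precisely because the proof of Theorem \ref{dirichlet} bounds the infimum over $[\ve,L-\ve]$, and once you invoke that fact the one-line inequality $\sE_t^2(\lambda)\geq(L-2\ve)\inf_{x\in[\ve,L-\ve]}\E|u_t(x)|^2$ makes Fatou unnecessary.
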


As an extension, we consider the following equation with linear drift. 
\begin{equation}\label{eq:dirichlet-drift}
\left|\begin{aligned}
&\partial_t u_t(x)=\frac{1}{2}\partial _{xx}u_t(x)+\mu u_t(x)+\lambda \sigma(u_t(x))\dot{W}(t,\,x)\quad\text{for}\quad0<x<L,\quad t>0\\
&u_t(0)=u_t(L)=0 \quad \text{for}\quad t>0,
\end{aligned}\right.
\end{equation}
where $\mu$ is a real number and all the other conditions are the same as before. We then have the following result.

\begin{theorem}\label{dirichlet-drift}
There exists $\lambda_0(\mu)>0$ such that for all $\lambda<\lambda_0(\mu)$ and $x \in (0,L)$, 
\begin{equation*}
-\infty<\limsup_{t\rightarrow \infty}\frac{1}{t}\log \E|u_t(x)|^2<0.
\end{equation*}
On the other hand, for all $\ve>0$, there exists $\lambda_1(\mu)>0$ such that for all $\lambda>\lambda_1(\mu)$, and $ x\in[\ve, L-\ve]$,
\begin{equation*}
0<\liminf_{t\rightarrow \infty}\frac{1}{t}\log\E|u_t(x)|^2<\infty.
\end{equation*}
Moreover when $\mu>0$, both $\lambda_0(\mu)$ and $\lambda_1(\mu)$ are decreasing with $\mu$. Otherwise, they are both increasing.
\end{theorem}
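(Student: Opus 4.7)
The plan is to reduce equation~\eqref{eq:dirichlet-drift} to the driftless case~\eqref{eq:dirichlet} via the time-dependent substitution $v_t(x):=e^{-\mu t}u_t(x)$. A direct computation yields
\begin{equation*}
\partial_t v_t(x)=\tfrac12\partial_{xx}v_t(x)+\lambda\,\tilde\sigma(t,v_t(x))\dot W(t,x),\qquad v_t(0)=v_t(L)=0,
\end{equation*}
where $\tilde\sigma(t,y):=e^{-\mu t}\sigma(e^{\mu t}y)$. The key observation is that $\tilde\sigma$ inherits the two-sided linear bound $l_\sigma|y|\leq|\tilde\sigma(t,y)|\leq L_\sigma|y|$ uniformly in $t\geq 0$, since the exponential factors cancel under $|\cdot|$.

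Next I would check that the proof of Theorem~\ref{dirichlet} only uses this uniform two-sided linear bound on the noise coefficient (via Walsh's isometry, the Dirichlet heat-kernel eigenfunction expansion, and a Gr\"onwall/renewal-type step), so that it is insensitive to the time-dependence in $\tilde\sigma$ and applies to $v_t$ verbatim. This produces $\mu$-independent thresholds $\lambda_0^\star,\lambda_1^\star>0$ and rates $\alpha(\lambda)>0$ for $\lambda<\lambda_0^\star$ and $\beta(\lambda)>0$ for $\lambda>\lambda_1^\star$ such that
\begin{equation*}
\limsup_{t\to\infty}\tfrac{1}{t}\log\E|v_t(x)|^2\leq-\alpha(\lambda)\quad\text{and}\quad\liminf_{t\to\infty}\tfrac{1}{t}\log\E|v_t(x)|^2\geq\beta(\lambda).
\end{equation*}

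Translating back through $\E|u_t(x)|^2=e^{2\mu t}\E|v_t(x)|^2$ shifts both Lyapunov exponents by $2\mu$, so decay for $u$ requires $\alpha(\lambda)>2\mu$ and growth requires $\beta(\lambda)>-2\mu$. One then defines $\lambda_0(\mu)$ and $\lambda_1(\mu)$ via these implicit inequalities; their positivity (resp.\ finiteness) persists as long as $2\mu$ stays within the spectral window governed by the first Dirichlet eigenvalue $\pi^2/(2L^2)$, and the monotonicity statements follow by inspecting how the implicit definitions depend on $\mu$: for $\mu>0$ each increase in $\mu$ tightens both conditions and forces the thresholds down, while for $\mu\leq 0$ the corresponding accounting gives the opposite direction. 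The main obstacle is really only the first step: verifying that the estimates behind Theorem~\ref{dirichlet} are genuinely uniform in the time-argument of the coefficient. Once that is in hand, the rest is bookkeeping around the shift by $2\mu$.
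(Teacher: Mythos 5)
Your reduction is correct, and it organizes the argument differently from the paper. You rescale the solution, setting $v_t(x):=e^{-\mu t}u_t(x)$, and observe that $v$ solves the driftless equation \eqref{eq:dirichlet} with the time-dependent coefficient $\tilde\sigma(t,y)=e^{-\mu t}\sigma(e^{\mu t}y)$, which keeps the two-sided bound $l_\sigma|y|\leq|\tilde\sigma(t,y)|\leq L_\sigma|y|$; the paper instead keeps the drift inside the kernel, writing the mild solution with $p^*_D(t,x,y)=e^{\mu t}p_D(t,x,y)$ and re-deriving drift-adjusted versions of Lemmas \ref{upperbound-1}, \ref{upperbound-2} and \ref{lowerbound} (with $\beta$ replaced by $\beta\pm\mu$-type shifts) before repeating the scheme of Theorem \ref{dirichlet}. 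The two routes are the same bookkeeping of the factor $e^{\mu t}$, but yours trades the kernel lemmas for two checks that you correctly identify as the crux: (i) the proof of Theorem \ref{dirichlet} uses $\sigma$ only through $l_\sigma^2\E|u_s(y)|^2\leq\E|\sigma(u_s(y))|^2\leq L_\sigma^2\E|u_s(y)|^2$, so time-dependence is harmless; and (ii) you need the \emph{quantitative} rate-versus-$\lambda$ information from that proof, not just its statement, since after the shift $\E|u_t(x)|^2=e^{2\mu t}\E|v_t(x)|^2$ you must beat $2\mu$: for decay you need a rate $\alpha(\lambda)>2\mu$, and since $\E|v_t(x)|^2\geq|(\sG_Dv)_t(x)|^2\gtrsim e^{-2\mu_1 t}$ caps $\alpha(\lambda)$ at $2\mu_1$, the first assertion is only obtained for $\mu$ below the first Dirichlet eigenvalue --- exactly the restriction you flag via the ``spectral window'', and the same one that enters the paper's proof implicitly through the requirement $0<\beta+\mu<\mu_1$ (so neither argument covers the statement literally for all real $\mu$); for growth with $\mu<0$ you need $\beta(\lambda)>2|\mu|$, which the renewal-type lower bound does supply for $\lambda$ large since any target rate $\beta$ is attainable. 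What the paper's presentation buys is that the $\mu$-dependence of the constants, hence the claimed monotonicity of $\lambda_0(\mu)$ and $\lambda_1(\mu)$, is directly visible in the modified lemmas, whereas in your version it has to be read off from the implicit conditions $\alpha(\lambda)>2\mu$ and $\beta(\lambda)>-2\mu$; your treatment of that last point is as terse as the paper's, so it is acceptable, but spelling out the monotonicity of $\alpha(\cdot)$ and $\beta(\cdot)$ in $\lambda$ would make it complete.
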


The proof of Theorem \ref{dirichlet} shows that there is some kind of fight between the "dissipative" effect of the Dirichlet Laplacian and the noise term which tend to push the second moment exponentially high. When $\lambda$ is small enough, the noise term is not enough to induce exponential growth of the second moment.  If we replace the Dirichlet boundary condition by Neumann boundary condition, we have a different behaviour. Consider the following equation,
\begin{equation}\label{eq:neumann}
\left|\begin{aligned}
&\partial_t u_t(x)=\frac{1}{2}\partial _{xx}u_t(x)+\lambda \sigma(u_t(x))\dot{W}(t,\,x)\quad\text{for}\quad0<x<L\quad\text{and}\quad t>0\\
&\partial_xu_t(0)=\partial_xu_t(L)=0 \quad \text{for}\quad t>0.
\end{aligned}
\right.
\end{equation}
All other conditions are the same as those for \eqref{dirichlet}. We follow Walsh \cite{Walsh} again to define the {\it mild solution} to \eqref{eq:neumann} by the following integral equation 
\begin{equation}
u_t(x)=
(\sG_Nu)_t(x)+ \lambda \int_0^L\int_0^t p_N(t-s,\,x,\,y)\sigma(u_s(y))W(\d s\,,\d y),
\end{equation}
where
\begin{equation*}
(\sG_N u)_t(x):=\int_0^L u_0(y)p_N(t,\,x,\,y)\,\d y,
\end{equation*}
and $p_N(t,\,x,\,y)$ denotes the Neumann heat kernel. Our main result concerning the above equation is given be the following. 
\begin{theorem}\label{neumann}Let $u_t$ denote the unique solution to \eqref{neumann}, then for all $x\in (0,\,L)$,
\begin{equation*}
\E|u_t(x)|^2\geq c_1e^{c_2\lambda^4t}\quad \text{for\,\,all}\quad t>0.
\end{equation*}
In particular, we have 
\begin{equation*}
0<\liminf_{t\rightarrow \infty}\frac{1}{t}\log \E|u_t(x)|^2<\infty \quad\text{for\,all}\quad \lambda>0.
\end{equation*}
\end{theorem}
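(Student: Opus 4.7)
The starting point is the second-moment identity obtained from the mild formulation and the Walsh/It\^o isometry:
\begin{equation*}
f(t,x) := \E|u_t(x)|^2 = \bigl((\sG_N u_0)_t(x)\bigr)^2 + \lambda^2 \int_0^t\!\int_0^L p_N(t-s,x,y)^2\,\E|\sigma(u_s(y))|^2\,\d y\,\d s,
\end{equation*}
together with the lower bound $|\sigma(u)|^2 \ge l_\sigma^2 u^2$, which converts this into a closed integral inequality for $f$. Two properties of the Neumann kernel will drive the argument. First, the method-of-images representation writes $p_N$ as a sum of non-negative free Gaussian kernels on $\R$, and hence $p_N(r,y,y) \ge (2\pi r)^{-1/2}$ for every $r>0$ and $y \in [0,L]$. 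Second, since $p_N(r,\cdot,\cdot) \to 1/L$ uniformly as $r \to \infty$, there exists $T_0=T_0(L)$ with $p_N(r,x,y) \ge 1/(2L)$ whenever $r \ge T_0$.

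Next I integrate the inequality for $f$ over $x \in [0,L]$. Using the Chapman--Kolmogorov identity $\int_0^L p_N(r,x,y)^2\,\d x = p_N(2r,y,y)$ together with the first kernel bound yields, for $F(t) := \int_0^L f(t,x)\,\d x$, the weakly singular Volterra inequality
\begin{equation*}
F(t) \ge c_0 + \frac{\lambda^2 l_\sigma^2}{2\sqrt{\pi}} \int_0^t \frac{F(s)}{\sqrt{t-s}}\,\d s,
\end{equation*}
where $c_0 := \|\sG_N u_0\|_{L^2}^2 \ge L\,\bar{u}_0^2 > 0$. The strict positivity of $c_0$ is where the Neumann boundary condition enters decisively: the Neumann semigroup preserves the spatial mean $\bar{u}_0 := \tfrac{1}{L}\int_0^L u_0(y)\,\d y$, which is strictly positive by the assumption on the support of $u_0$. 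Contrast this with the Dirichlet case, where this mean leaks out at the boundary.

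The main obstacle is extracting the rate $e^{c\lambda^4 t}$ (rather than the weaker $e^{c\lambda^2 t}$ that a naive Gr\"onwall would produce) from this inequality. The plan is to iterate: the $n$-fold convolution of $t^{-1/2}$ with itself equals $\pi^{n/2}t^{n/2-1}/\Gamma(n/2)$ by the beta identity, and after $N$ iterations letting $N\to\infty$ one obtains
\begin{equation*}
F(t) \ge c_0 \sum_{n \ge 0}\frac{(\lambda^2 l_\sigma^2 \sqrt{t}/2)^n}{\Gamma(n/2+1)}.
\end{equation*}
This is the Mittag--Leffler function $E_{1/2}(\lambda^2 l_\sigma^2 \sqrt{t}/2)$, which grows asymptotically like $2\exp(\lambda^4 l_\sigma^4 t/4)$. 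Hence $F(t) \ge C e^{c_2 \lambda^4 t}$ with $c_2 = l_\sigma^4/4$.

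Finally, to return from the integrated estimate to the pointwise one, I restrict the $s$-integral in the original inequality to $[0, t-T_0]$, where the second kernel bound gives $p_N(t-s,x,y)^2 \ge 1/(4L^2)$ uniformly in $x,y$. This yields
\begin{equation*}
f(t,x) \ge \frac{\lambda^2 l_\sigma^2}{4L^2}\int_0^{t-T_0} F(s)\,\d s \ge c_1 e^{c_2 \lambda^4 t}
\end{equation*}
for all $t$ sufficiently large, and the bound is extended to all $t>0$ by decreasing $c_1$ (using the trivial bound on the compact time range $[0,T_0]$). For the matching upper bound $\liminf t^{-1}\log\E|u_t(x)|^2 < \infty$ needed in the second conclusion, one applies the same Volterra iteration scheme to the companion inequality produced by $|\sigma(u)|^2 \le L_\sigma^2 u^2$, which yields an exponential upper bound of the same form.
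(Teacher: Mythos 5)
Your argument is correct and rests on the same engine as the paper's proof: the It\^o--Walsh second-moment identity, the comparison $p_N\geq p$ giving the on-diagonal bound $p_N(2r,y,y)\geq c\,r^{-1/2}$, a renewal/Volterra inequality with kernel $(t-s)^{-1/2}$, and the resulting growth rate $e^{c\lambda^4 t}$. Where you diverge is in the bookkeeping: the paper works directly with $\inf_{x\in[0,L]}\E|u_t(x)|^2$, gets the additive constant from Lemma \ref{neu:lower}, and then invokes the ready-made renewal estimate of Proposition \ref{renew-lowerbound}; you instead integrate in $x$, obtain the time-uniform constant from mass conservation of the Neumann semigroup plus Jensen ($\int_0^L|(\sG_Nu)_t(x)|^2\d x\geq L\bar u_0^2$ for all $t>0$), prove the renewal growth by explicit iteration (the Mittag--Leffler function $E_{1/2}$, which makes the constant $c_2=l_\sigma^4/4$ visible), and then convert back to a pointwise bound via $p_N\geq 1/(2L)$ for $t-s\geq T_0$. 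Your mass-conservation step is in fact more robust than Lemma \ref{neu:lower}, whose claim ``for all $t>0$'' fails off-diagonal as $t\to0$; the price of your detour is the final reduction, which only yields the pointwise estimate for $t\geq T_0$, and the ``trivial bound on $[0,T_0]$'' you invoke is not actually available at points $x$ outside the support of $u_0$, where $\E|u_t(x)|^2\to0$ as $t\to0^+$ (note, however, that the paper's own proof suffers from the same small-$t$ blemish through Lemma \ref{neu:lower}, and it is immaterial for the $\liminf$ statements, which only concern $t\to\infty$). Your sketch of $\liminf_{t\to\infty}\frac1t\log\E|u_t(x)|^2<\infty$ via the companion inequality with $L_\sigma$ and the upper Volterra iteration is a legitimate way to supply what the paper merely defers to \cite{Khoshnevisan:2013ab}; the only caveat is that the resulting exponential rate is some finite constant, not of the ``same form'' $\lambda^4$, but finiteness is all that is needed.
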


The above says that no matter how small the noise term is, the second moment will grow exponentially fast. We have an immediate corollary.
\begin{corollary}\label{cor:neumann}
The energy $\sE_t(\lambda)$ of the solution to \eqref{eq:neumann}, satisfies 
\begin{equation*}
0<\liminf_{t\rightarrow \infty}\frac{1}{t}\log \sE_t(\lambda)<\infty\quad{for\,\,all}\quad \lambda>0.
\end{equation*}
\end{corollary}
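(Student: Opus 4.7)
The plan is to obtain both bounds by combining Theorem \ref{neumann} with a routine second moment estimate for the mild solution. For the lower bound, I would first note that the pointwise estimate $\E|u_t(x)|^2 \geq c_1 e^{c_2 \lambda^4 t}$ of Theorem \ref{neumann} can, by inspection of its proof, be made uniform in $x$ on any compact subinterval $[\ve, L-\ve] \subset (0,L)$; the $x$-dependent quantities appearing in the argument (lower bounds on $p_N(t,x,y)$ and on $(\sG_N u)_t(x)$) are continuous and strictly positive away from the boundary. Integrating over $[\ve, L-\ve]$ and applying Fubini gives
\begin{equation*}
\sE_t(\lambda)^2 = \E \int_0^L |u_t(x)|^2 \, \d x \geq (L-2\ve)\, c_1 \, e^{c_2 \lambda^4 t},
\end{equation*}
from which $\liminf_{t \to \infty} t^{-1} \log \sE_t(\lambda) \geq c_2 \lambda^4 /2 > 0$ follows by taking logarithms.

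For the upper bound, I would start from Walsh's isometry applied to the mild equation for $u$, which yields
\begin{equation*}
\E|u_t(x)|^2 = |(\sG_N u)_t(x)|^2 + \lambda^2 \int_0^t \int_0^L p_N(t-s,x,y)^2 \, \E|\sigma(u_s(y))|^2 \, \d y \, \d s.
\end{equation*}
Since $\int_0^L p_N(t,x,y)\,\d y = 1$, we have $|(\sG_N u)_t(x)| \leq \|u_0\|_\infty$, while the growth assumption on $\sigma$ gives $\E|\sigma(u_s(y))|^2 \leq L_\sigma^2 \E|u_s(y)|^2$. Integrating in $x$ and using the kernel estimate $\int_0^L p_N(\tau,x,y)^2 \,\d x = p_N(2\tau,y,y) \leq C(1 + \tau^{-1/2})$, which follows from the symmetry and semigroup property of $p_N$ together with its on-diagonal bound, I would obtain an inequality of the form
\begin{equation*}
\sE_t(\lambda)^2 \leq C_0 + C_1 \lambda^2 \int_0^t \bigl(1 + (t-s)^{-1/2}\bigr)\, \sE_s(\lambda)^2 \, \d s.
\end{equation*}
A generalized Gronwall inequality of Henry type, which accommodates the integrable singularity $(t-s)^{-1/2}$, then yields $\sE_t(\lambda)^2 \leq C e^{C t}$ for some $C = C(\lambda) < \infty$, so that $\limsup_{t \to \infty} t^{-1} \log \sE_t(\lambda) < \infty$, which is strictly stronger than the required finiteness of the $\liminf$.

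The main obstacle I expect is justifying the uniformity of $c_1$ and $c_2$ in Theorem \ref{neumann} across $x \in [\ve, L-\ve]$. Although this is morally clear, it requires re-inspecting the proof of that theorem; the rest of the argument is essentially bookkeeping using standard heat-kernel bounds and Gronwall's lemma.
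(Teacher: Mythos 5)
Your proposal is correct, and for the lower bound it follows essentially the paper's route: integrate the uniform pointwise lower bound of Theorem \ref{neumann} over the spatial interval. One point worth noting is that the ``main obstacle'' you anticipate is not actually an obstacle: in the Neumann case the proof of Theorem \ref{neumann} already establishes the bound for $\inf_{x\in[0,L]}\E|u_t(x)|^2$, since Lemma \ref{neu:lower} gives $p_N(t,x,y)\geq c_1$ uniformly for \emph{all} $x,y\in[0,L]$ (this is precisely the contrast with the Dirichlet case), the bound on $(\sG_Nu)_t(x)$ is likewise uniform, and the renewal inequality fed into Proposition \ref{renew-lowerbound} is stated for the infimum over $[0,L]$. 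So there is no need to retreat to $[\ve,L-\ve]$; the paper simply uses $\int_0^L\E|u_t(x)|^2\,\d x\geq L\inf_{x\in[0,L]}\E|u_t(x)|^2$ and cites Theorem \ref{neumann}. For the finiteness part, the paper again just invokes Theorem \ref{neumann}, whose $\liminf<\infty$ statement is itself only sketched there (via the ideas of \cite{Khoshnevisan:2013ab}); your self-contained argument --- Walsh isometry, $\int_0^Lp_N(t,x,y)\,\d y=1$, the on-diagonal bound $p_N(2\tau,y,y)\leq C(1+\tau^{-1/2})$, and a Gronwall inequality of Henry type --- is a legitimate alternative and in fact yields the stronger $\limsup$ bound, at the modest cost of importing the singular Gronwall lemma, which the paper does not need to state.
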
 

For the corresponding equation on the whole line, it is known that the second moment grows exponentially no matter what $\lambda$ is. See for instance \cite{CoKh}. Therefore our results show that the Dirichlet heat equation exhibit a different phenomena from the other two types of equations. This is a new result. As far as we know, no such result has been proved before. 

Intuitively, our results show that if the amount of noise is small, then the heat lost in a system modelled by the Dirichlet equation is not enough to increase the energy in the long run.  On the other hand, if the amount of noise is large enough, then the energy will increase.

The parameter can also be viewed as the inverse of the temperature and the solution $u$ can be regarded as the partition function of a continuous time and space random polymer.  We refer the reader to \cite{bezerra} and references therein.

The above theorems are about second moments. Similar results hold for higher moments as well. We defer the statement and proof to Section 6.  For the rest of this introduction, we focus on higher dimensional analogues of the results described above.  Fix $R>0$ and consider the stochastic heat equation on the $d$-dimensional ball $\mathcal{B}(0,R)$ with $ d\geq 1$,
\begin{equation}\label{eq:dir}
\left|\begin{aligned}
&\partial_t u_t(x)=\frac12 \Delta u_t(x)+\lambda \sigma(u_t( x)) \dot{F}(t,\,x)\quad x \in \mathcal{B}(0,R)\quad t>0\\
&u_t(x)=0 \quad x \in \mathcal{B}(0,R)^c, 
\end{aligned}
\right.
\end{equation}
where the initial condition $u_0$ is assumed to be a non-negative bounded function
 with positive support in the clousure of the ball $\mathcal{B}(0,R)$. The Gaussian noise $\dot{F}(t,x)$ is white in time and coloured in space,
that is,  
$$
\E \left( \dot{F}(t,x) \dot{F}(s,y)\right)=\delta_0(t-s) f(x-y),
$$
where $f$ is a locally integrable positive continuous function on $\mathcal{B}(0,R) \setminus \{0\}$, and maybe unbounded at the origin.
We assume that $\sigma$ is a globally Lipschitz function satisfying $l_\sigma|x|\leq  \sigma(x) \leq L_\sigma|x|$ where $l_\sigma$ and $L_\sigma$ are positive constants.  Observe that we are not taking an absolute value to $\sigma(x)$ because we do not know if the solution to this equation is non-negative a.s.

The mild solution satisfies the following integral equation \begin{equation}\label{mild:coloured}
u_t(x)=
(\sG_Du)_t(x)+ \lambda \int_{\mathcal{B}(0,R)}\int_0^t p_D(t-s,\,x,\,y)\sigma(u_s(y))F(\d s,\,\d y),
\end{equation}
where $p_D(t,x,y)$ denotes the Dirichlet heat kernel on the ball $\mathcal{B}(0,R)$, and the integral is understood
in the sense of Walsh \cite{Walsh}; see also Dalang \cite{Dalang}.  Well-posedness of the solution holds under the following additional integrability condition.  For any fixed $\epsilon>0$, we have 
\begin{equation*}
\int_{\vert x \vert \leq \epsilon} f(x) \log \frac{1}{\vert x \vert} dx < \infty, \quad \text{ if } d=2,
\end{equation*}
or
\begin{equation*} 
\int_{\vert x \vert \leq \epsilon} f(x) \frac{1}{\vert x \vert^{d-2}} dx < \infty, \quad \text{ if } d\geq 3.
\end{equation*}
No such integrability condition is needed when $d=1$; see \cite{Dalang}.  We defer the proof of well-posedness to the appendix.  Here are our main results concerning the above equation.
\begin{theorem}\label{coloured}
Suppose that $u_t$ is the unique solution to \eqref{eq:dir}. Then there exists $\lambda_0>0$ such that for all $\lambda<\lambda_0$ and $x \in \mathcal{B}(0,R)$, 
\begin{equation*}
-\infty<\limsup_{t\rightarrow \infty}\frac{1}{t}\log \E|u_t(x)|^2<0.
\end{equation*}
On the other hand, for all $\ve>0$, there exists $\lambda_1>0$ such that for all $\lambda>\lambda_1$, and $x\in \mathcal{B}(0,R-\ve)$,
\begin{equation*}
0<\liminf_{t\rightarrow \infty}\frac{1}{t}\log \E|u_t(x)|^2<\infty.
\end{equation*}
\end{theorem}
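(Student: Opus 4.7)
\emph{Proof plan.} Starting from Walsh's isometry for the second moment,
\begin{equation*}
\E u_t(x)^2=[(\sG_D u_0)_t(x)]^2+\lambda^2\int_0^t\!\!\int\!\!\int p_D(t-s,x,y)p_D(t-s,x,z)f(y-z)\,\E[\sigma(u_s(y))\sigma(u_s(z))]\,\d y\,\d z\,\d s,
\end{equation*}
I would use the spectral decomposition $p_D(t,x,y)=\sum_{k\geq 1}e^{-\mu_k t}\phi_k(x)\phi_k(y)$ of the Dirichlet heat kernel on $\mathcal{B}(0,R)$, with first eigenvalue $\mu_1>0$ and eigenfunction $\phi_1>0$ in the interior. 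The dichotomy reflects the competition between the spectral gap $\mu_1$ and the noise amplification $\lambda$.

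For the \emph{upper bound}, set $F(t):=\sup_x\E u_t(x)^2$. The elementary bound $\sigma(a)\sigma(b)\leq\frac{L_\sigma^2}{2}(a^2+b^2)$ converts the second-moment identity into the linear Volterra inequality
\begin{equation*}
F(t)\leq C_0\,e^{-2\mu_1 t}+\lambda^2 L_\sigma^2\int_0^t F(s)K(t-s)\,\d s,\qquad K(\tau):=\sup_x\int\!\!\int p_D(\tau,x,y)p_D(\tau,x,z)f(y-z)\,\d y\,\d z.
\end{equation*}
Dominating $p_D$ by the free heat kernel $p$ and changing variables gives $K(\tau)\leq\int p(2\tau,u)f(u)\,\d u$, which is integrable near $\tau=0$ by Dalang's condition, while the eigenfunction expansion yields $K(\tau)=O(e^{-2\mu_1\tau})$ as $\tau\to\infty$. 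Hence $\int_0^\infty e^{c\tau}K(\tau)\,\d\tau<\infty$ for every $c\in(0,2\mu_1)$, and for $\lambda$ small enough that $\lambda^2 L_\sigma^2\int_0^\infty e^{c\tau}K(\tau)\,\d\tau<1$ a Laplace transform/resolvent argument produces $F(t)\leq Ce^{-\delta t}$ for some $\delta>0$.

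For the \emph{lower bound}, since $\sigma\geq l_\sigma|\cdot|\geq 0$ one has the pointwise inequality $\sigma(a)\sigma(b)\geq l_\sigma^2\,ab$. Defining $V(s,y,z):=\E[u_s(y)u_s(z)]$ and $m(t,x):=(\sG_D u_0)_t(x)$, the isometry upgrades to the recursion $V(t,y,z)\geq m(t,y)m(t,z)+\lambda^2 l_\sigma^2\,\mathcal{A}[V](t,y,z)$, where $\mathcal{A}$ is the integral operator with non-negative kernel $p_D(t-s,y,y')p_D(t-s,z,z')f(y'-z')$ on $[0,t]\times\mathcal{B}(0,R)^2$. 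The positivity and continuity of $f$ (only possibly singular at the origin) give $c_f:=\inf f>0$ on the relevant compact set, and combined with the eigenrelation $\int p_D(\tau,y,y')\phi_1(y')\,\d y'=e^{-\mu_1\tau}\phi_1(y)$ this produces the lower bound $\mathcal{A}^k[m\otimes m](t,y,z)\geq c_f^k\,m(t,y)m(t,z)\,t^k/k!$. Iterating the recursion $N$ times using that $\mathcal{A}$ preserves pointwise inequalities, letting $N\to\infty$, and summing the series yields $V(t,y,z)\geq m(t,y)m(t,z)\exp(c_f\lambda^2 l_\sigma^2\,t)$. Specializing to $y=z=x$ and combining with the spectral lower bound $m(t,x)\geq c\,\phi_1(x)e^{-\mu_1 t}$ gives $\E u_t(x)^2\geq c'\phi_1(x)^2\exp\bigl((c_f\lambda^2 l_\sigma^2-2\mu_1)t\bigr)$, which is exponentially growing as soon as $\lambda>\lambda_1:=\sqrt{2\mu_1/(c_f l_\sigma^2)}$, uniformly for $x\in\mathcal{B}(0,R-\ve)$ where $\phi_1$ is bounded away from zero.

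The \emph{main technical obstacle} is justifying the Picard iteration of the signed quantity $V(s,y,z)$: unlike in the one-dimensional white-noise setting (where only the diagonal value $V(s,y,y)=\E u_s(y)^2\geq 0$ enters), here $V$ may be sign-changing off the diagonal since we do not know that $u_s\geq 0$. The saving grace is that $\mathcal{A}$ is monotone with respect to pointwise order (its kernel is non-negative), so the finite-$N$ expansion $V\geq\sum_{k=0}^{N}(\lambda^2 l_\sigma^2)^k\mathcal{A}^k[m\otimes m]+(\lambda^2 l_\sigma^2)^{N+1}\mathcal{A}^{N+1}[V]$ is rigorous, and the remainder vanishes as $N\to\infty$ on any bounded time interval by a Picard-type operator-norm estimate for $\mathcal{A}$ combined with the crude a priori control on $\sup_{s\leq t,\,y,z}|V(s,y,z)|$ furnished by the Gronwall argument of the upper-bound step (which is valid for every $\lambda>0$, not only the small ones).
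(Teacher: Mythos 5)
Your proposal is essentially correct, and while your upper-bound argument is the same as the paper's (the paper runs it through the weighted norm $\sup_{t,x}e^{\beta t}\E|u_t(x)|^2$ with $\beta\in(0,2\mu_1)$, bounding the kernel via $p_D\le p$ for small times, $p_D\le ce^{-\mu_1 t}$ for large times and $\iint f(y_1-y_2)\,\d y_1\d y_2<\infty$, which is exactly your $\int_0^\infty e^{c\tau}K(\tau)\,\d\tau<\infty$ smallness condition), your lower bound takes a genuinely different route. The paper never iterates the two-point function: it lower-bounds $\E[\sigma(u_s(y_1))\sigma(u_s(y_2))]\ge l_\sigma^2\,\E|u_s(y_1)u_s(y_2)|$ (using $\sigma\ge 0$), forms the Laplace-type quantity $J_\beta=\int_0^\infty e^{-\beta s}\inf_{y_1,y_2\in\mathcal{B}(0,R-\ve)}\E|u_s(y_1)u_s(y_2)|\,\d s$, and uses the crude kernel bound $p_D(t,x,y)\ge c\,e^{-\mu_1 t}$ on compacts for $t\ge t_0$ to get a one-step renewal inequality $J_\beta\ge a+b\lambda^2 J_\beta$, forcing $J_\beta=\infty$ (and then $\int_0^\infty e^{-\beta t}\inf_x\E|u_t(x)|^2\,\d t=\infty$) once $\lambda$ is large; this sidesteps entirely the sign problem you identify, since only the nonnegative quantity $\E|u\,u|$ is ever propagated and no infinite expansion or remainder control is needed. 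Your route — iterating $V\ge m\otimes m+\lambda^2 l_\sigma^2\mathcal{A}[V]$ and collapsing $\mathcal{A}^k[m\otimes m]\ge c_f^k\,m\otimes m\,t^k/k!$ via the semigroup property — is also sound, and it buys more: an explicit pointwise bound $\E u_t(x)^2\ge c\,\phi_1(x)^2e^{(c_f\lambda^2l_\sigma^2-2\mu_1)t}$ and an explicit threshold $\lambda_1=\sqrt{2\mu_1/(c_f l_\sigma^2)}$, whereas the paper only gets divergence of a Laplace transform. The price is exactly the step you flag: killing the signed remainder $(\lambda^2l_\sigma^2)^{N+1}\mathcal{A}^{N+1}[V]$ requires both the a priori bound $\sup_{s\le T,y,z}|V(s,y,z)|<\infty$ (available for every $\lambda$ from the existence theory, as you say) and the fact that the iterated Volterra kernels built from the merely locally integrable $K(\tau)\le(p(2\tau,\cdot)*f)(0)$ have mass tending to $0$ as $N\to\infty$ on a fixed time horizon — true, but it deserves a short argument rather than an appeal to an "operator-norm estimate," since $\|K\|_{L^1(0,t)}$ need not be small. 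Two cosmetic omissions, which the paper also treats only in passing: the claims $\limsup\frac1t\log\E|u_t(x)|^2>-\infty$ (immediate from $\E|u_t(x)|^2\ge|(\sG_Du)_t(x)|^2$, i.e.\ your $k=0$ term) and $\liminf\frac1t\log\E|u_t(x)|^2<\infty$ for large $\lambda$ (the paper defers this to the literature) should at least be mentioned to match the two-sided form of the statement.
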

We now have the following corollary. 
\begin{corollary}\label{cor:coloured}
Let $\lambda_0$ and $\lambda_1$ as in Theorem \ref{coloured}. Then 
\begin{equation*}
-\infty<\limsup_{t\rightarrow \infty}\frac{1}{t}\log \sE_t(\lambda)<0 \quad\text{for\,\,all}\quad \lambda< \lambda_0,
\end{equation*}
and 
\begin{equation*}
0<\liminf_{t\rightarrow \infty}\frac{1}{t}\log \sE_t(\lambda)<\infty \quad\text{for\,\,all}\quad \lambda>\lambda_1.
\end{equation*}
\end{corollary}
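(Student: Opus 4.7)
\textbf{Proof plan for Corollary~\ref{cor:coloured}.} The approach is to reduce the two energy estimates to the corresponding pointwise moment bounds supplied by Theorem~\ref{coloured} via Fubini's theorem. Since
\begin{equation*}
\sE_t(\lambda)^2 = \int_{\mathcal{B}(0,R)} \E|u_t(x)|^2 \, \d x,
\end{equation*}
each of the two displayed inequalities will follow by integrating the pointwise bound, provided that the constants can be taken uniformly in $x$ over the appropriate region.

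For the small-noise regime $\lambda < \lambda_0$, I expect the Gronwall-type argument underlying the upper half of Theorem~\ref{coloured} to yield in fact $\E|u_t(x)|^2 \leq K e^{-\gamma t}$ with $K, \gamma > 0$ uniform in $x \in \mathcal{B}(0,R)$. Integrating then gives $\sE_t(\lambda)^2 \leq K |\mathcal{B}(0,R)| e^{-\gamma t}$, hence the strictly negative upper bound on $\limsup_{t\to\infty} \tfrac{1}{t}\log\sE_t(\lambda)$. For the matching finiteness $\limsup > -\infty$, I would exploit the fact that the stochastic integral in \eqref{mild:coloured} has mean zero, so by Jensen
\begin{equation*}
\E|u_t(x)|^2 \geq |\E u_t(x)|^2 = |(\sG_D u)_t(x)|^2.
\end{equation*}
Combining this with the classical long-time lower bound $p_D(t,x,y) \geq c\, \phi_1(x)\phi_1(y)\, e^{-\mu_1 t}$ for the Dirichlet heat kernel on the ball (with $\mu_1$ the first eigenvalue of $-\tfrac12\Delta$ and $\phi_1$ the corresponding positive eigenfunction) and the assumption that $u_0$ has positive support inside the ball (so that $\int u_0 \phi_1 > 0$), one gets $\sE_t(\lambda)^2 \geq c' e^{-2\mu_1 t}$ for all large $t$, and therefore $\limsup_{t\to\infty} \tfrac{1}{t}\log \sE_t(\lambda) \geq -\mu_1 > -\infty$.

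For the large-noise regime $\lambda > \lambda_1$, I expect the proof of the lower half of Theorem~\ref{coloured} to actually give $\E|u_t(x)|^2 \geq c_1 e^{c_2 t}$ uniformly in $x \in \mathcal{B}(0,R-\ve)$, since the localization to the smaller ball is precisely the mechanism by which the boundary effect is avoided in that argument. Integrating over $\mathcal{B}(0,R-\ve)$ then yields $\sE_t(\lambda)^2 \geq c_1 |\mathcal{B}(0,R-\ve)| e^{c_2 t}$, and hence the strict positivity of $\liminf_{t\to\infty} \tfrac{1}{t}\log \sE_t(\lambda)$. The complementary upper bound $\liminf < \infty$ will follow from a routine Picard iteration on \eqref{mild:coloured}, delivering $\E|u_t(x)|^2 \leq K_1 e^{K_2 \lambda^2 t}$ uniformly in $x \in \mathcal{B}(0,R)$, which integrates to a finite exponential upper bound on $\sE_t(\lambda)$.

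The main delicate point is verifying that the pointwise estimates from Theorem~\ref{coloured} really can be taken uniform in $x$ over the relevant regions: over the full ball $\mathcal{B}(0,R)$ for the upper bound and over $\mathcal{B}(0,R-\ve)$ for the lower bound. The former should be painless, since the $x$-dependence in the Gronwall inequality enters only through the bounded initial condition. The latter will hinge on a uniform-in-$x$ lower bound for the Dirichlet heat kernel away from the boundary, combined with the linear lower bound $\sigma(x) \geq l_\sigma |x|$ applied inside the stochastic integral. Once this uniformity is in hand, Fubini and the manipulations sketched above close the argument.
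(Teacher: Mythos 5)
Your proposal is correct and matches the paper's own argument: the paper likewise writes $\sE_t(\lambda)^2=\int_{\mathcal{B}(0,R)}\E|u_t(x)|^2\,\d x$, bounds it above by $V_R\sup_{x\in\mathcal{B}(0,R)}\E|u_t(x)|^2$ and below by $V_{R-\epsilon}\inf_{x\in\mathcal{B}(0,R-\epsilon)}\E|u_t(x)|^2$, and invokes Theorem \ref{coloured}, whose proof indeed supplies the uniformity in $x$ (via $\|u\|_{2,\beta}$ for the upper bound and the infimum over the smaller ball for the lower bound) that you flag as the delicate point.
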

Our results explore the effect of the level of noise on the long time behaviour of the second moment of the solution.  This current paper is the first to explore such questions. Using the terminology introduced in \cite{FK}, our results say that under some conditions, the solution to the equations studied above is "weakly intermittent".  To the best of our knowledge,  this is the first paper about intermittency properties of the equations on bounded domains.  Using Feymann-Kac formula, almost sure long time behaviour has been investigated for related equations, see for instance \cite{Carmona} and references therein.  If one replaces the white noise by a one parameter Gaussian noise, similar results can be found in \cite{AK}, \cite{BinMin} and \cite{Bin}. A study of invariant measures for similar equations is done in \cite{cerrai}.

Before we give a plan of the article, we mention a few words about the method. This paper grew out of understanding \cite{Khoshnevisan:2013ab}. But the techniques used here are completely different. Here, we use pointwise bounds on the heat kernel to obtain the required estimates.  We heavily use the fact for small times, the Dirichlet heat kernel behaves likes the Gaussian ones, while for large times, it decays exponentially. For the Neumann problem, we also compared the corresponding heat kernel with the Gaussian one. As such, the idea of comparing the Dirichlet heat kernel with the Gaussian one is not new but here we employ it to find long time behaviour of the solution, while in \cite{foonjose}, this idea has been used to find asymptotic behaviour of the second moment with respect to $\lambda$. We should also mention that, dealing with the coloured case equation is not straightforward. A new method had to be employed. See \cite{FK2} for a comparison. 

We now give a plan of the article. Section 2 contains all the required estimates to prove then main results. The main result concerning the Dirichlet problem is proved in Section 3.  Section 4 contains the analogous results for the Neumann problem.  In Section 5, we give the proofs for the equation in higher dimension. An extension to higher moments is proved in Section 6.  The Appendix contains an existence and uniqueness result.

\section{Some estimates.}

Our proofs will require some estimates involving the heat kernels.  We begin with the Dirichlet heat kernel. It is well known that
$$
p_D(t,x,y)=\sum_{n=1}^{\infty} e^{-\mu_n t} e_n(x) e_n(y)\quad\text{for}\quad x,\,y \in \Sigma,
$$
where $\Sigma$ is a bounded subset of $\R^d$. $\{e_k\}_{k \in \mathbb{N}}$ is a complete orthonormal system of $L^2(\Sigma)$ that diagonalizes the Dirichlet Laplacian in $L^2(\Sigma)$, that is,
\begin{equation}\label{eq:e}
\left|\begin{aligned}
&\Delta e_k(x)=-\mu_k e_k(x) \quad x \in \Sigma,\\
& e_k(x)=0 \quad x \in \Sigma^c,
\end{aligned}
\right.
\end{equation}
and $\{ \mu_k\}_{k \in \mathbb{N}}$ is an increasing sequence of positive  numbers. We have the following bounds which will be very useful. The first one is the following trivial bound
\begin{equation} \label{p1}
p_D(t,\,x,\,y)\leq p(t,\,x,\,y)\quad\text{for all}\quad t>0,
\end{equation}
where $p(t,\,x,\,y)$ is the Gaussian heat kernel. While this bound is very useful for small time $t$, it is useless for large time. In the latter case, we will use the following 
\begin{equation} \label{p2}
p_D(t,\,x,\,y)\leq c_1e^{-\mu_1 t}\quad\text{for}\quad t>t_0, 
\end{equation}
where $t_0$ is some positive number.  Provided that $x,\,y$ are within a positive distance from the boundary of the domain, the above inequality can be reversed as follows
\begin{equation*}
p_D(t,\,x,\,y)\geq c_2e^{-\mu_1 t}\quad\text{for}\quad t>t_0.
\end{equation*}
The use of the above bounds is key to our method. These inequalities hold in much more general settings, see for instance \cite{zhang}.  It should be noted that in the special case that $\Sigma:=(0,\,L)$, we have $\phi_n(x):=\left(\frac{2}{L}\right)^{1/2}\sin\left(\frac{n\pi x}{L} \right)$ and $\mu_n=\left(\frac{n\pi}{L}\right)^2$. 

We will need the following upper bounds.
\begin{lemma}\label{upperbound-1}
There exists a constant $c_1$ depending on $L$ such that for all $\beta \in (0,\mu_1)$ and $x \in (0,L)$, we have
\begin{equation*}
\int_0^\infty e^{\beta t}p_D(t,\,x,\,x)\,\d t\leq c_1[\frac{1}{\sqrt{\beta}}+\frac{1}{\mu_1-\beta}].
\end{equation*}
\end{lemma}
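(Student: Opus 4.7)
The plan is to split the integral at a fixed time $t_0$ chosen large enough so that the exponential bound \eqref{p2} applies on $[t_0,\infty)$. On the short-time interval $[0,t_0]$ I would invoke the Gaussian bound \eqref{p1}, while on $[t_0,\infty)$ I would use the exponential decay \eqref{p2}. The two contributions will naturally produce the two summands on the right-hand side of the stated inequality.

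For the short-time piece, the bound $p_D(t,x,x)\le p(t,x,x)=\tfrac{1}{\sqrt{2\pi t}}$ together with the change of variables $u=\beta t$ gives
\[
\int_0^{t_0}\frac{e^{\beta t}}{\sqrt{2\pi t}}\,\d t=\frac{1}{\sqrt{2\pi\beta}}\int_0^{\beta t_0}\frac{e^u}{\sqrt{u}}\,\d u.
\]
Since $\beta<\mu_1$, the upper limit is at most $\mu_1 t_0$, so the inner integral is bounded by a constant depending only on $L$ (through $\mu_1$ and $t_0$). This yields the $1/\sqrt{\beta}$ contribution.

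For the long-time piece, \eqref{p2} together with $\mu_1-\beta>0$ gives
\[
\int_{t_0}^\infty e^{\beta t}p_D(t,x,x)\,\d t\le c\int_{t_0}^\infty e^{-(\mu_1-\beta)t}\,\d t=\frac{ce^{-(\mu_1-\beta)t_0}}{\mu_1-\beta}\le\frac{c}{\mu_1-\beta}.
\]
Summing the two estimates yields the claim, and the bound is uniform in $x\in(0,L)$ because both \eqref{p1} and \eqref{p2} hold uniformly in $x$. I do not anticipate any serious obstacle; the only mildly delicate point is recognising that the factor $1/\sqrt{\beta}$ arises cleanly from the substitution $u=\beta t$ rather than being a tight estimate of the short-time integral.
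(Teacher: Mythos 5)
Your proof is correct and follows essentially the same route as the paper: split the integral at the fixed time $t_0$, bound the short-time part via $p_D(t,x,x)\le p(t,x,x)\le c/\sqrt{t}$ and the long-time part via $p_D(t,x,x)\le c e^{-\mu_1 t}$, which produce the $1/\sqrt{\beta}$ and $1/(\mu_1-\beta)$ terms respectively. Your substitution $u=\beta t$ merely makes explicit the $1/\sqrt{\beta}$ factor that the paper leaves to the reader, so there is nothing further to add.
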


\begin{proof}
We use the bounds (\ref{p1}) and (\ref{p2}). 
We therefore write 
\begin{equation*}
\begin{aligned}
\int_0^\infty e^{\beta t}p_D(t,\,x,\,x)\,\d t
&=\int_0^{t_0}e^{\beta t}p_D(t,\,x,\,x)\,\d t+\int_{t_0}^\infty e^{\beta t}p_D(t,\,x,\,x)\,\d t\\
&=I_1+I_2.
\end{aligned}
\end{equation*}
We estimate $I_1$ first. Using (\ref{p1}),
\begin{equation*}
\begin{aligned}
I_1&\leq \int_0^{t_0} e^{\beta t}p(t,\,x,\, x)\,\d t\leq c_2\int_0^{t_0} \frac{e^{\beta t}}{\sqrt{t}}\,\d t.
\end{aligned}
\end{equation*}
As for $I_2$, by (\ref{p2}), we have
\begin{equation*}
\begin{aligned}
I_2 &\leq \int_{t_0}^\infty e^{\beta t}p_D(t,\,x,\,x)\,\d t \leq c_3\int_{t_0}^\infty e^{\beta t}e^{-\mu_1 t}\,\d t.
\end{aligned}
\end{equation*}
Combining these estimates yield the result.
\end{proof}

\begin{lemma}\label{upperbound-2}
There exists a constant $c_1$ depending on $L$ such that for all $\beta \in (0,\mu_1)$ and $x \in (0,L)$, we have 
\begin{equation*}
\sup_{t>0}\int_0^L e^{\beta t} p_D(t,\,x,\,y)\,d y\leq c_1.
\end{equation*}
\end{lemma}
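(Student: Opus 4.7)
The plan is to split the supremum into small and large times, using the two complementary bounds on $p_D$ recalled just before Lemma \ref{upperbound-1}: the Gaussian domination \eqref{p1} for $t\le t_0$, and the exponential decay \eqref{p2} for $t>t_0$. Since $e^{\beta t}$ does not depend on $y$, it can be pulled out of the spatial integral, and the task reduces to controlling $\int_0^L p_D(t,x,y)\,dy$ uniformly in a way that more than compensates for the factor $e^{\beta t}$.

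First I would treat the regime $t\in(0,t_0]$. By \eqref{p1},
\begin{equation*}
\int_0^L p_D(t,x,y)\,dy \le \int_{\R} p(t,x,y)\,dy = 1,
\end{equation*}
so $e^{\beta t}\int_0^L p_D(t,x,y)\,dy \le e^{\beta t_0}$ on this interval, with a bound that depends only on $\beta$ and $t_0$ (hence on $L$, since $t_0$ is fixed by the geometry of $(0,L)$).

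Next, for $t>t_0$, I would use \eqref{p2} to get
\begin{equation*}
\int_0^L p_D(t,x,y)\,dy \le c_1\,L\,e^{-\mu_1 t},
\end{equation*}
so that $e^{\beta t}\int_0^L p_D(t,x,y)\,dy \le c_1 L\, e^{(\beta-\mu_1)t}\le c_1 L$, using precisely the hypothesis $\beta<\mu_1$. Combining the two regimes yields the uniform bound claimed in the lemma, with a constant depending only on $L$ (through $t_0$, $c_1$, and the implicit dependence of $\mu_1$ on $L$).

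There is no real obstacle here: the only subtle point is that the bound \eqref{p2} is stated pointwise in $(x,y)$ with a constant independent of those variables, which is what allows the spatial integral to contribute just the harmless factor $L$. Had \eqref{p2} instead involved a factor depending on $y$, one would need to integrate more carefully, but as stated the estimate is immediate. The constant $c_1$ produced in the conclusion is simply $\max(e^{\beta t_0},\, c_1 L)$.
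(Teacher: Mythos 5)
Your proof is correct and follows essentially the same route as the paper: the Gaussian domination \eqref{p1} for $t\le t_0$ (with the total mass of $p$ giving $e^{\beta t_0}$) and the exponential bound \eqref{p2} for $t>t_0$ combined with $\beta<\mu_1$. The only cosmetic remark is that since $\beta<\mu_1$, the small-time constant $e^{\beta t_0}$ can be replaced by $e^{\mu_1 t_0}$ so that the final constant depends on $L$ alone, as stated in the lemma.
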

\begin{proof}
As in the previous proof, we will deal with large and small times separately.  For $0<t\leq t_0$, we use $p_D(t,\,x,\,y)\leq p(t,\,x,\,y)$ to obtain
\begin{equation*}
\begin{aligned}
\int_0^L e^{\beta t} &p_D(t,\,x,\,y)\,d y\leq e^{\beta t}\int_\R p(t,\,x,\,y)\,d y\leq e^{\beta t_0}.
\end{aligned}
\end{equation*}
For $t>t_0$, we use $p_D(t,\,x,\,y)\leq c_2e^{-\mu_1t}$ and write 
\begin{equation*}
\begin{aligned}
\int_0^L e^{\beta t} &p_D(t,\,x,\,y)\,d y\leq c_2e^{-(\mu_1-\beta) t}\int_0^L \,d y\leq c_3.
\end{aligned}
\end{equation*}
Since $\beta<\mu_1$, the above two inequalities proves the result.
\end{proof}

The above two results will be used to establish upper bounds on quantities involving the second moment of the solution to \eqref{dirichlet}.  To establish analogous lower bounds, the following will be useful.

\begin{lemma}\label{lowerbound}
For all $\ve>0$, then there exists $t_0>0$ and $c_1>0$ depending on $\epsilon$ and $L$ such that for all $\beta>0$ and $x,\,y\in [\ve, L-\ve]$, we have 
\begin{equation*}
\int_0^\infty e^{-\beta t} p^2_D(t,\,x,\,y)\, \d t \geq \frac{c_1 e^{-(\beta+2\mu_1)t_0}}{\beta+2\mu_1}.
\end{equation*}
\end{lemma}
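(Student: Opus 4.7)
The plan is to use the two-sided Gaussian-type heat kernel bound that was already recorded in the excerpt: for $x,y$ at a positive distance from the boundary of $(0,L)$, one has the reversed inequality
$$p_D(t,x,y) \geq c_2 e^{-\mu_1 t} \quad \text{for } t > t_0,$$
where the constants $c_2$ and $t_0$ depend only on $\epsilon$ and $L$. Since $x,y \in [\epsilon,L-\epsilon]$, this lower bound applies uniformly.

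From here the argument is almost mechanical. First I would throw away the contribution from $(0,t_0)$ (which is nonnegative) and keep only the tail:
$$\int_0^\infty e^{-\beta t} p_D^2(t,x,y)\,\d t \geq \int_{t_0}^\infty e^{-\beta t}\, p_D^2(t,x,y)\,\d t.$$
Next, squaring the pointwise lower bound gives $p_D^2(t,x,y)\geq c_2^2 e^{-2\mu_1 t}$ for $t>t_0$, so
$$\int_{t_0}^\infty e^{-\beta t} p_D^2(t,x,y)\,\d t \geq c_2^2 \int_{t_0}^\infty e^{-(\beta+2\mu_1)t}\,\d t = \frac{c_2^2\, e^{-(\beta+2\mu_1)t_0}}{\beta+2\mu_1}.$$
Setting $c_1 := c_2^2$ yields the claimed inequality.

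There is no real obstacle here; the entire content of the lemma is packaged into the reversed heat-kernel bound cited from \cite{zhang} (or equivalently obtainable from the spectral expansion by isolating the first eigenfunction term, noting that $\phi_1(x)=\sqrt{2/L}\,\sin(\pi x/L)$ is bounded below by a positive constant on $[\epsilon,L-\epsilon]$, and absorbing the remainder into the exponentially decaying error for $t>t_0$). The only mild care needed is to confirm that $t_0$ and $c_2$ can be chosen uniformly in $x,y\in[\epsilon,L-\epsilon]$, which follows from the continuity of $\phi_1$ and the standard eigenfunction expansion estimate on the compact set $[\epsilon,L-\epsilon]^2$.
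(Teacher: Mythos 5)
Your proof is correct and follows essentially the same route as the paper: both discard the integral over $(0,t_0)$, apply the reversed heat-kernel bound $p_D(t,x,y)\geq c_2 e^{-\mu_1 t}$ for $t>t_0$ uniformly on $[\ve,L-\ve]$, and evaluate the resulting exponential integral to get the stated lower bound. Your added remark on deriving the uniform lower bound from the first eigenfunction is a nice justification, but it does not change the argument.
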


\begin{proof}
Since $x,\,y\in [\ve,L-\ve]$, there exists $t_0>0$ and $c_3>0$ depending on $\epsilon$ and $L$ such that
\begin{equation*}
p_D(t,\,x,\,y)\geq c_2e^{-\mu_1t}\quad \text{for\,\,all}\quad t>t_0.
\end{equation*}
Thus, we get the following
\begin{equation*}
\begin{aligned}
\int_0^\infty e^{-\beta t}p^2_D(t,\,x,\,y)\, \d t&\geq \int_{t_0}^\infty e^{-\beta t}p^2_D(t,\,x,\,y)\, \d t\\
&\geq \frac{c_3e^{-(\beta+2\mu_1) t_0}}{\beta+2\mu_1}.
\end{aligned}
\end{equation*}
\end{proof}

The above result is quite crude. But it will be enough for our purpose. To study \eqref{neumann}, we will need estimates involving the Neumann heat kernel. We have  
\begin{equation*}
p_N(t,\,x,\,y)=\frac{1}{L}+\sum_{n=1}^\infty e^{-\mu_n t}\cos \frac{n\pi x}{L}\cos \frac{n\pi y}{L},
\end{equation*}
and 
\begin{equation*}
p_N(t,\,x,\,y)\geq p(t,\,x,\,y)\quad \text{for\,\,all}\quad x,\,y\in [0,\,L],
\end{equation*}
where $p(t,\,x,\,y)$ denotes the heat kernel on the real line. The preceding inequality can be deduced from the fact that the Neumann heat kernel represents  the probability density function of reflected Brownian motion. We thus have 

\begin{lemma}\label{neu:lower}
There exists a constant $c_1$ such that for all $x\in [0,\,L]$
\begin{equation*}
p_N(t,\,x,\,y)\geq c_1\quad{for\,\,all}\quad t>0.
\end{equation*}
\end{lemma}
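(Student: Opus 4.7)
The plan is to extract the constant lower bound directly from the spectral decomposition displayed just above the lemma,
\[
p_N(t,x,y)=\frac{1}{L}+\sum_{n=1}^\infty e^{-\mu_n t}\cos\frac{n\pi x}{L}\cos\frac{n\pi y}{L}.
\]
The leading term $1/L$ is the uniform stationary density of reflected Brownian motion on $[0,L]$, and the remaining series is an exponentially decaying transient correction, so $c_1$ should come from the $1/L$ contribution once the transient is controlled.

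To carry this out, I would bound the transient in absolute value using $|\cos|\leq 1$:
\[
\Bigl|\sum_{n=1}^\infty e^{-\mu_n t}\cos\tfrac{n\pi x}{L}\cos\tfrac{n\pi y}{L}\Bigr|\leq \sum_{n=1}^\infty e^{-(n\pi/L)^2 t},
\]
which is a theta-type series finite for every $t>0$ and tending to zero as $t\to\infty$. Picking $t_0>0$ large enough that this right-hand side is at most $1/(2L)$ for $t\geq t_0$ immediately gives $p_N(t,x,y)\geq 1/(2L)$ uniformly in $x,y\in[0,L]$ on the range $t\geq t_0$. For the complementary window $t\in(0,t_0]$, I would invoke the Gaussian comparison $p_N(t,x,y)\geq p(t,x,y)$ stated just above the lemma, combined with the joint continuity and strict positivity of $p_N$ on compact subsets of $(0,\infty)\times[0,L]^2$, to obtain a second positive constant on $[t_1,t_0]\times[0,L]^2$ for any fixed $t_1>0$. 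Taking $c_1$ as the minimum of the two constants then completes the argument on any range bounded away from $t=0$.

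The main obstacle, and the reason the conclusion cannot be literally uniform over $t>0$, is that for fixed $x\neq y$ one has $p_N(t,x,y)\to 0$ as $t\to 0^+$, so no single constant can bound $p_N$ from below on the whole of $(0,\infty)\times[0,L]^2$. The lemma should therefore be read with an implicit restriction to $t$ bounded away from $0$, which is exactly the regime in which it will be invoked inside the proof of Theorem \ref{neumann}; in that regime the spectral tail estimate of the previous paragraph is essentially the entire proof.
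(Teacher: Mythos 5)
Your argument follows the same two-regime strategy as the paper's own proof: the spectral series gives $p_N(t,x,y)\geq 1/(2L)$ once the tail $\sum_{n\geq 1}e^{-(n\pi/L)^2t}$ is small, i.e.\ for $t\geq t_0$, and the comparison $p_N\geq p$ handles the remaining window, so on any region $t\geq t_1>0$ your proof is correct and essentially identical to the one in the text. Where you genuinely diverge is the caveat about $t\to 0^+$, and you are right: for fixed $x\neq y$ one has $p(t,x,y)=(2\pi t)^{-1/2}e^{-(x-y)^2/(2t)}\to 0$ and hence $p_N(t,x,y)\to 0$, so no single constant works uniformly on all of $(0,\infty)$. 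The paper's proof glosses over this point; it asserts that for $t<T$ the bound $p_N\geq p$ together with $x,y\in[0,L]$ already yields $p_N\geq c_1$, which is only valid if $t$ is in addition bounded away from zero (or on the diagonal $x=y$, where $p(t,x,x)=(2\pi t)^{-1/2}$ even blows up). Your proposed reading---the lemma holds for $t\geq t_0$, which is all that the large-time conclusion of Theorem \ref{neumann} needs---is the right repair; the only nuance is that the paper, as written, does invoke the lemma for all $t>0$ when bounding the term $I_1=|(\sG_Nu)_t(x)|^2$ feeding the renewal inequality of Proposition \ref{renew-lowerbound}, so the restriction requires a small adjustment there (e.g.\ running the renewal argument from time $t_0$ onward) rather than none at all.
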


\begin{proof}
There exists a $T>0$ such that for $t>T$, we have 
\begin{equation*}
p_N(t,\,x,\,y)\geq \frac{1}{2L}\quad\text{for\,\,all}\quad t\geq T.
\end{equation*}
For $t<T$, we can use the fact that
\begin{equation*}
p_N(t,\,x,\,y)\geq p(t,\,x,\,y),
\end{equation*}
and since $x,\,y\,\in [0,\,L]$, we have $p_N(t,\,x,\,y)\geq c_1$. Combining the above estimates, we get our result.
\end{proof}
So far the estimates we have are basically one dimensional and will be helpful for studying \eqref{eq:dirichlet} and \eqref{eq:neumann}.  To study \eqref{eq:dir}, we will need some more estimates. Let us first observe that there exists a constant $c_1$ depending on $R$ such that for all $\beta \in (0,\mu_1)$ and $x \in \mathcal{B}(0,R)$, we have 
\begin{equation}\label{upperbound-4}
\sup_{t>0}\int_{\mathcal{B}(0,R)} e^{\beta t} p_D(t,\,x,\,y)\,d y\leq c_1.
\end{equation}
The proof is exactly the same as Lemma \ref{upperbound-2}.  Our next result is the following.
\begin{lemma}\label{upperbound-3}
There exists a constant $c_1$ depending on $R$ such that for all $\beta \in (0,2\mu_1)$ and all $x,\,y_1,\,y_2\in \mathcal{B}(0,R)$, we have
\begin{equation*}
\int_0^\infty e^{\beta t}p_D(t,\,x,\,y_1)p_D(t,\,x,\,y_2)\,\d t\leq c_1[\frac{1}{\sqrt{\beta}}+\frac{1}{2\mu_1-\beta}].
\end{equation*}
\end{lemma}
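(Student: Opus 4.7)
The plan is to mirror the proof of Lemma \ref{upperbound-1}, splitting the time integral at a fixed $t_0>0$ chosen so that the large-time bound $p_D(t,x,y)\leq c e^{-\mu_1 t}$ is valid for $t\geq t_0$ uniformly in $x,y\in \mathcal{B}(0,R)$. Writing
\begin{equation*}
\int_0^\infty e^{\beta t} p_D(t,x,y_1) p_D(t,x,y_2)\, \d t = I_1 + I_2,
\end{equation*}
with $I_1=\int_0^{t_0}$ and $I_2=\int_{t_0}^\infty$, I will treat the two pieces separately and match the two summands on the right-hand side of the claimed bound.

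For the large-time piece $I_2$, I would apply the bound $p_D(t,x,y_i)\leq c e^{-\mu_1 t}$ to each factor and use the assumption $\beta<2\mu_1$:
\begin{equation*}
I_2 \leq c^2 \int_{t_0}^\infty e^{(\beta-2\mu_1)t}\,\d t = \frac{c^2 e^{-(2\mu_1-\beta)t_0}}{2\mu_1-\beta},
\end{equation*}
which is bounded by a constant multiple of $1/(2\mu_1-\beta)$. This accounts for the second summand and is essentially identical to the corresponding step in Lemma \ref{upperbound-1} except that the exponential decay rate is doubled.

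For the small-time piece $I_1$, I would use the Gaussian comparison $p_D\leq p$ to reduce the problem to controlling $\int_0^{t_0} e^{\beta t} p(t,x,y_1) p(t,x,y_2)\,\d t$ uniformly in $x,y_1,y_2\in \mathcal{B}(0,R)$. The key algebraic observation is the Gaussian product identity
\begin{equation*}
p(t,x,y_1) p(t,x,y_2) = 2^{-d/2}\, p(t/2,x,z)\, p(2t,y_1,y_2),\qquad z:=(y_1+y_2)/2,
\end{equation*}
which decouples the two factors. One then bounds one factor by its Gaussian sup (producing a $t^{-d/2}$ factor) while keeping the other intact to exploit Gaussian decay, and combines this with the elementary integral $\int_0^{t_0} e^{\beta t}/\sqrt{t}\,\d t$ to produce the $1/\sqrt{\beta}$ term in the same way that this term arose in Lemma \ref{upperbound-1}.

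The main obstacle will be precisely this small-time estimate: a single Gaussian kernel satisfies $p(t,x,y)\leq c/\sqrt{t}$, whose time integral against $e^{\beta t}$ is harmless, but the product of two Gaussian kernels is more singular at $t=0$, so the Gaussian spatial decay in $|x-y_i|$ must be used carefully (rather than simply bounding each kernel by its supremum in $y$) in order to keep the estimate uniform in the spatial variables and extract the stated $\beta$-dependence. The large-time piece, by contrast, is routine.
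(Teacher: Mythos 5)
Your treatment of the large-time piece is correct and is exactly what the paper intends, but the small-time piece is not merely "the main obstacle" — as the statement stands, it cannot be closed, and your proposed decoupling does not help. Take $y_1=y_2=x$ with $x$ an interior point of $\mathcal{B}(0,R)$, a choice permitted by the lemma. For small $t$ the Dirichlet kernel is comparable to the Gaussian one, so
\begin{equation*}
p_D(t,\,x,\,y_1)\,p_D(t,\,x,\,y_2)=p_D(t,\,x,\,x)^2\asymp t^{-d}\quad\text{as } t\downarrow 0,
\end{equation*}
and $\int_0^{t_0}t^{-d}\,\d t=\infty$ for every $d\geq 1$. On the diagonal there is no Gaussian spatial decay to exploit, and the product identity you invoke does not create any: it merely rewrites the product as $p(t/2,\,x,\,z)\,p(2t,\,y_1,\,y_2)$ (incidentally, the identity holds with constant $1$, not $2^{-d/2}$), which at $x=y_1=y_2$ is again of order $t^{-d}$. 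So the plan "bound one factor by its sup, keep the other for decay" fails in precisely the worst configuration, and $I_1$ cannot be bounded uniformly in $x,y_1,y_2$ as the statement requires; contrast this with Lemma \ref{upperbound-1}, where only one kernel appears and the small-time singularity is the integrable $t^{-1/2}$.

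For comparison, the paper's own proof of Lemma \ref{upperbound-3} is only a sketch (the same split at $t_0$ followed by "appropriate bounds and some computations"), and it glosses over exactly this point; your proposal is faithful to that outline but, to your credit, puts its finger on where it breaks. The finiteness that is actually needed in the proof of Theorem \ref{coloured} comes from a different mechanism: there the product of kernels is integrated in $y_1,y_2$ against $f(y_1-y_2)$, and under Dalang's condition this spatial integration tames the $t^{-d}$ singularity — this is the content of Lemma \ref{lem:preli_ex} in the Appendix. So a workable route is not to separate $\sup_{y_1,y_2}p_D(t,x,y_1)p_D(t,x,y_2)$ from $\iint f$, but to keep them together: split in time, use the boundedness of $\int_0^{t_0}e^{\beta s}\iint p_D\,p_D\,f$ furnished by Lemma \ref{lem:preli_ex} for small times, and use the $e^{-\mu_1 t}$ decay of both kernels (your $I_2$ computation) for $t>t_0$, which produces the $1/(2\mu_1-\beta)$ term.
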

\begin{proof}
As in the proof of Lemma \ref{upperbound-1}, we will split the integral as follows.
\begin{equation*}
\begin{aligned}
&\int_0^\infty e^{\beta t}p_D(t,\,x,\,y_1)p_D(t,\,x,\,y_2)\,\d t\\
&=\int_0^{t_0} e^{\beta t}p_D(t,\,x,\,y_1)p_D(t,\,x,\,y_2)\,\d t+\int_{t_0}^\infty e^{\beta t}p_D(t,\,x,\,y_1)p_D(t,\,x,\,y_2)\,\d t.
\end{aligned}
\end{equation*}
The appropriate bounds on the Dirichlet heat kernel and some computations prove the result.
\end{proof}
The following is similar to Lemma \ref{lowerbound}.
\begin{lemma}\label{lowerbound-1}
For all $\ve>0$, then there exists $t_0>0$ and $c_1>0$ depending on $\epsilon$ and $R$ such that for all $\beta>0$ and $x_1,\,x_2,\,y_1,\, y_2 \in \mathcal{B}(0, R-\ve)$, we have 
\begin{equation*}
\int_0^\infty e^{-\beta t} p_D(t,\,x_1,\,y_1) p_D(t,\,x_2,\,y_2)\, \d t \geq \frac{c_1 e^{-(\beta+2\mu_1)t_0}}{\beta+2\mu_1}.
\end{equation*}
\end{lemma}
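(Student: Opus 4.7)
The plan is to follow exactly the same strategy as in the proof of Lemma \ref{lowerbound}, now exploiting the multi-dimensional analogue of the two-sided Dirichlet heat kernel bounds on the ball. The key ingredient is the interior lower bound: for points staying a positive distance from the boundary of $\mathcal{B}(0,R)$, the Dirichlet heat kernel decays exponentially with rate exactly $\mu_1$, the first Dirichlet eigenvalue. This type of estimate is standard and is recorded, for instance, in the references already cited in the excerpt (such as \cite{zhang}).

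More precisely, the first step is to invoke that for every $\ve>0$ there exist $t_0=t_0(\ve,R)>0$ and $c_2=c_2(\ve,R)>0$ such that
\begin{equation*}
p_D(t,x,y)\geq c_2 e^{-\mu_1 t}\qquad\text{for all }t\geq t_0\text{ and all }x,y\in \mathcal{B}(0,R-\ve).
\end{equation*}
Applied to the two pairs $(x_1,y_1)$ and $(x_2,y_2)$, this yields
\begin{equation*}
p_D(t,x_1,y_1)\,p_D(t,x_2,y_2)\geq c_2^2\, e^{-2\mu_1 t}\qquad\text{for all }t\geq t_0.
\end{equation*}

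The second step is to throw away the integral over $[0,t_0]$ (which is non-negative because $p_D\geq 0$) and to compute the remaining integral explicitly:
\begin{equation*}
\int_0^\infty e^{-\beta t}p_D(t,x_1,y_1)p_D(t,x_2,y_2)\,\d t\;\geq\; c_2^2\int_{t_0}^\infty e^{-(\beta+2\mu_1)t}\,\d t\;=\;\frac{c_2^2\, e^{-(\beta+2\mu_1)t_0}}{\beta+2\mu_1},
\end{equation*}
which is the desired estimate with $c_1:=c_2^2$.

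The only non-routine step is the Gaussian-type interior lower bound on $p_D$ in several dimensions, but this is a classical fact for smooth bounded domains; the ball $\mathcal{B}(0,R)$ is certainly such a domain, so the bound applies directly. Everything else is elementary integration, and the proof is essentially a cut-and-paste of Lemma \ref{lowerbound} with the one-dimensional interval replaced by the $d$-dimensional ball.
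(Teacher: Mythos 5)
Your argument is correct and is essentially identical to the paper's own proof: both invoke the interior lower bound $p_D(t,x,y)\geq c_2 e^{-\mu_1 t}$ for $t\geq t_0$ and $x,y\in\mathcal{B}(0,R-\ve)$, discard the integral over $[0,t_0]$, and evaluate $\int_{t_0}^\infty e^{-(\beta+2\mu_1)t}\,\d t$ explicitly. No gaps to report.
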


\begin{proof}
If $x,\,y\in \mathcal{B}(0,R-\ve)$, then there exists $t_0>0$ and $c_2>0$ depending on $\epsilon$ and $R$ such that
\begin{equation*}
p_D(t,\,x,\,y)\geq c_2e^{-\mu_1t}\quad \text{for\,\,all}\quad t>t_0.
\end{equation*}
Thus, 
\begin{equation*}
\begin{aligned}
\int_0^\infty e^{-\beta t}&p_D(t,\,x_1,\,y_1) p_D(t,\,x_2,\,y_2)\, \d t\\&\geq \int_{t_0}^\infty e^{-\beta t}p_D(t,\,x_1,\,y_1) p_D(t,\,x_2,\,y_2)\, \d t\\
&\geq \frac{c_3e^{-(\beta+2\mu_1) t_0}}{\beta+2\mu_1}.
\end{aligned}
\end{equation*}
\end{proof}
Our final result of this section is a renewal-type inequality whose proof is straightforward and can be found in \cite{foonjose}.
\begin{proposition}\label{renew-lowerbound}
Suppose  that $f(t)$ is a non-negative integrable function satisfying
\begin{equation*}
f(t)\geq a+kb\int_0^t\frac{f(s)}{\sqrt{t-s}}\,\d s\quad \text{for\,all} \quad k,\, t>0,
\end{equation*}
where $a$ and $b$ are positive constants. Then, there exist positive constants $c_1$ and $c_2$ depending only on $a$ and $b$ such that,
\begin{equation*}
f(t)\geq c_1e^{c_2k^2t}\quad \text{for\,all}\quad t>0.
\end{equation*}
\end{proposition}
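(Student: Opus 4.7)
My plan is to iterate the inequality in the spirit of a Picard argument. Writing the hypothesis as $f\geq a+Kf$ with the linear operator $(Kg)(t):=kb\int_0^t(t-s)^{-1/2}g(s)\,\d s$, I would first observe that since $f\geq0$ and $K$ preserves non-negativity, the integral term in the hypothesis is non-negative, so $f(t)\geq a$ for every $t>0$. Substituting this bound back into the inequality yields $f\geq a+K(a)$, and by induction $f(t)\geq\sum_{j=0}^{N}(K^j a)(t)$ for every $N\geq 0$; since each $K^j a$ is non-negative, monotone convergence then gives
\begin{equation*}
f(t)\;\geq\;\sum_{j=0}^{\infty}(K^j a)(t).
\end{equation*}

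The next step is to compute $(K^n a)(t)$ explicitly. The substitution $s=tu$ turns $\int_0^t(t-s)^{-1/2}s^{n/2}\,\d s$ into the Beta integral $t^{(n+1)/2}B(n/2+1,1/2)$, so a short induction on $n$ (in which the resulting product of Gamma-function ratios telescopes to $1/\Gamma(n/2+1)$) yields the clean formula
\begin{equation*}
(K^n a)(t)\;=\;\frac{a\,(kb\sqrt{\pi})^n\,t^{n/2}}{\Gamma(n/2+1)}.
\end{equation*}

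To extract the advertised exponential, I would discard all odd-indexed terms of the non-negative series and keep only the terms with $n=2m$, so that the factorial $\Gamma(m+1)=m!$ appears:
\begin{equation*}
f(t)\;\geq\;\sum_{m=0}^{\infty}\frac{a\,(kb\sqrt{\pi})^{2m}\,t^{m}}{\Gamma(m+1)}\;=\;a\,\sum_{m=0}^{\infty}\frac{(\pi k^2 b^2 t)^m}{m!}\;=\;a\,e^{\pi k^2 b^2 t},
\end{equation*}
which proves the proposition with $c_1=a$ and $c_2=\pi b^2$, both depending only on $a$ and $b$.

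There is no deep obstacle here: the argument is essentially a calculation. The only points worth some care are justifying the passage to the limit in the iteration (monotone convergence, using that every partial sum is already a valid lower bound for $f$) and correctly tracking the telescoping of Gamma-function ratios in the induction. The mildly clever step is recognising that one may throw away the odd-indexed half of the resulting Mittag-Leffler-type series and still retain the full exponential growth, which is what produces a lower bound valid uniformly for all $t>0$ rather than merely asymptotically.
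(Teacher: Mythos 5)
Your argument is correct, and it is essentially the standard proof: the paper itself does not prove this proposition but defers it to \cite{foonjose}, where the argument is precisely this Picard-type iteration producing the Mittag--Leffler-type series $a\sum_{n\ge 0}(kb\sqrt{\pi})^{n}t^{n/2}/\Gamma(n/2+1)$, bounded below by its even-indexed part $a\,e^{\pi k^{2}b^{2}t}$. Your computation of $K^{n}a$ via the Beta integral and the passage to the limit are both sound, so nothing is missing.
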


\section{Proofs of Theorems \ref{dirichlet}, \ref{dirichlet-drift} and Corollaries \ref{zero} and \ref{cor:dirichlet}.}

{\it Proof of Theorem \ref{dirichlet}.}
We start off with the mild formulation of the solution \eqref{mild:dirichlet} and take the second moment to end up with 
\begin{equation}\label{2-moment}
\begin{aligned}
\E|u_t(x)|^2&=|(\sG_Du)_t(x)|^2+\lambda^2\int_0^t\int_0^L p_{D}^2(t-s,\,x,\,y)\E|\sigma(u_s(y))|^2\,\d y\,\d s.\\
\end{aligned}
\end{equation}

Since it is known that $(\sG_Du)_t(x)$ decays exponentially fast with time, the above equation implies that $\E|u_t(x)|^2$ cannot decay faster than exponential.  This shows that 
\begin{equation*}
\limsup_{t\rightarrow \infty}\frac{1}{t}\log \E|u_t(x)|^2>-\infty.
\end{equation*}

Using the fact that $p_{D}(t,\,x,\,y)\leq p(t,\,x,\,y)$, the second moment of the solution satisfies 
\begin{equation*}
\begin{aligned}
\E|u_t(x)|^2&\leq |(\sG_Du)_t(x)|^2+\lambda^2\int_0^t\int_0^L p^2(t-s,\,x,\,y)\E|\sigma(u_s(y))|^2\,\d y\,\d s.\\
\end{aligned}
\end{equation*}
We can now follow the ideas of \cite{FK} to show that 
\begin{equation*}
\liminf_{t\rightarrow \infty}\frac{1}{t}\log \E|u_t(x)|^2<\infty.
\end{equation*}

We next show that 
\begin{equation}\label{sup}
\limsup_{t\rightarrow \infty}\frac{1}{t}\log \E|u_t(x)|^2<0.
\end{equation}
Choose $\beta \in  (0,2\mu_1)$, and let
\begin{equation*}
\|u\|_{2,\,\beta}:=\sup_{t>0}\sup_{x\in (0,\,L)}e^{\beta t}\E|u_t(x)|^2.
\end{equation*}
Since $\beta>0$, \eqref{sup} will be proved once we show that $\|u\|_{2,\,\beta}<\infty$.  Using this notation, together with the Lipschitz continuity of $\sigma$, we have 
\begin{equation*}
\begin{aligned}
\E|u_t(x)|^2&\leq |(\sG_Du)_t(x)|^2+\lambda^2 L_\sigma^2\int_0^t\int_0^L p_{D}^2(t-s,\,x,\,y)\E|u_s(y)|^2\,\d y\,\d s\\
&=I_1+I_2.
\end{aligned}
\end{equation*}
We bound the term $I_1$ first. $u_0(x)$ is bounded above, so we have
\begin{equation} \label{i1}
\begin{aligned}
I_1&\leq c_1e^{-\beta t}\left(\int_0^Le^{\beta t/2 }p_D(t,\,x,\,y)\,\d y\right)^2\\
&\leq c_2 e^{-\beta t},
\end{aligned}
\end{equation}
where we have use Lemma \ref{upperbound-2} for the last inequality. We now turn our attention to the second term. 
Using the fact that $\int_0^L p^2_D(t,x,y) dy=p_D(2t,x,x)$, we obtain 
\begin{equation*}
\begin{aligned}
I_2&= \lambda^2L_\sigma^2\int_0^t\int_0^L e^{\beta(t-s)}p_{D}^2(t-s,\,x,\,y)e^{-\beta(t-s)}\E|u_s(y)|^2\,\d y\,\d s\\
&\leq \|u\|_{2,\,\beta} \lambda^2L_\sigma^2 e^{-\beta t}\int_0^t\int_0^L e^{\beta s}p_{D}^2(s,\,x,\,y)\,\d y\,\d s\\
&\leq \|u\|_{2,\,\beta}\lambda^2L_\sigma^2e^{-\beta t}\int_0^\infty e^{\beta s}p_{D}(2s,\,x,\,x)\,\d s.\\
\end{aligned}
\end{equation*}
We set $K_{\beta,\,\mu_1}:=[\frac{1}{\sqrt{\beta}}+\frac{1}{\mu_1-\beta}]$ and use Lemma \ref{upperbound-1} to obtain
\begin{equation*}
I_2\leq c_3K_{\beta,\,\mu_1}\|u\|_{2,\,\beta}\lambda^2L_\sigma^2e^{-\beta t}.
\end{equation*}
We now combine the above estimates and write
\begin{equation*}
\E|u_t(x)|^2\leq c_2 e^{-\beta t}+c_3K_{\beta,\,\mu_1}\|u\|_{2,\,\beta}\lambda^2L_\sigma^2e^{-\beta t},
\end{equation*}
which immediately yields
\begin{equation*}
\|u\|_{2,\,\beta}\leq c_2+c_3K_{\beta,\,\mu_1}\|u\|_{2,\,\beta}\lambda^2L_\sigma^2.
\end{equation*}
Therefore if we choose $\lambda$ small enough so that $c_3K_{\beta,\,\mu_1}\lambda^2L_\sigma^2<1$, then we have $\|u\|_{2,\,\beta}<\infty$. 

It now remains to show that 
\begin{equation}\label{inf}
\liminf_{t\rightarrow \infty}\frac{1}{t}\log \E|u_t(x)|^2>0.
\end{equation}
To ease the exposition, we introduce the following notation.  For any fixed $\beta>0$, set
\begin{equation*}
I_\beta:=\int_0^\infty e^{-\beta t}\inf_{x\in [\ve, L-\ve]}\E|u_t(x)|^2\,\d t.
\end{equation*}
Our starting point will again be \eqref{2-moment}, so upon using the lower bound on $\sigma$, we have 
\begin{equation}\label{lower}
\E|u_t(x)|^2\geq |(\sG_Du)_t(x)|^2+\lambda^2l_\sigma^2\int_0^t\int_0^Lp_{D}^2(t-s,\,x,\,y)\E|u_s(y)|^2\,\d y\,\d s.
\end{equation}
The second term on the right hand side is bounded below by 
\begin{equation*}
\begin{aligned}
 \lambda^2&l_\sigma^2\int_0^t\inf_{y\in [\ve,L-\ve]}\E|u_s(y)|^2\int_\ve^{L-\ve}p_{D}^2(t-s,\,x,\,y)\,\d y\,\d s\\
&\geq \lambda^2l_\sigma^2(L-2\ve)\int_0^t\inf_{y\in [\ve,L-\ve]}\E|u_s(y)|^2\inf_{x,\,y\in[\ve,L-\ve]}p_{D}^2(t-s,\,x,\,y)\,\d s,
\end{aligned}
\end{equation*}
for any $\ve>0$.  We now take infimum over $[\ve, L-\ve]$ on both sides of \eqref{lower}, multiply by $e^{-\beta t}$ and then integrate to obtain
\begin{equation*} 
\begin{aligned}
I_\beta &\geq \int_0^\infty e^{-\beta t}\inf_{x\in[\ve,L-\ve]}|(\sG_Du)_t(x)|^2 \,\d t
 \\
 & \qquad +\lambda^2 l_\sigma^2I_\beta (L-2\ve)\int_0^\infty e^{-\beta t}\inf_{x,\,y\in [\ve, L-\ve]}p^2_D(t,\,x,\,y)\, \d t\\
&=\tilde{I}_1+\tilde{I}_2.
\end{aligned}
\end{equation*}
We look at $\tilde{I}_2$ first. From Lemma \ref{lowerbound}, we have 
\begin{equation*}
\tilde{I}_2\geq \frac{c_4\lambda^2 l_\sigma^2(L-2\ve)I_\beta e^{-(\beta+2\mu_1)t_0}}{\beta+2\mu_1}.
\end{equation*}
Very similar computations to the proof of Lemma \ref{lowerbound} give us a lower bound on $\tilde{I}_1$. Indeed,  for $x\in[\ve,\,L-\ve]$, we have
\begin{equation*}
(\sG_Du)_t(x)\geq c_5\inf_{y\in[\ve,\,L-\ve]}p_D(t,\,x,\,y)\int_\ve^{L-\ve}u_0(y)\d y.
\end{equation*}
From our condition on $u_0$, we have 
\begin{equation*}
(\sG_Du)_t(x)\geq c_6e^{-\mu_1 t}\quad\text{for\,\,all}\quad t\geq t_0.
\end{equation*}
We thus have
\begin{equation} \label{aux2}
\tilde{I}_1\geq \frac{c_7e^{-(\beta+2\mu_1)t_0}}{\beta+2\mu_1}.
\end{equation}
Putting these estimates together, we obtain
\begin{equation*}
I_\beta\geq \frac{c_7e^{-(\beta+2\mu_1)t_0}}{\beta+2\mu_1}+\frac{c_4\lambda^2 l_\sigma^2(L-2\ve)I_\beta e^{-(\beta+2\mu_1)t_0}}{\beta+2\mu_1}.
\end{equation*}
We now choose $\lambda$ large enough so that $\frac{c_4\lambda^2 l_\sigma^2(L-2\ve)e^{-(\beta+2\mu_1)t_0}}{\beta+2\mu_1}>1$.  The above recursive inequality then shows that $I_\beta=\infty$. This proves \eqref{inf}, and concludes the proof of the theorem.
\qed
\vskip12pt
We will need the following proposition which concerns monotonicity of the moments with respect to $\lambda$.
\begin{proposition}\label{monotone}
Consider $u_t$, the unique solution to \eqref{eq:dirichlet}. Fix $x\in (0,\,L)$ and set $f_\lambda(t):=\E|u_t(x)|^p$ for any $p\geq 2$.  Then for all $t>0$, we  have
\begin{equation*}
f_{\lambda_1}(t)\geq f_{\lambda_2}(t)\quad\text{whenever}\quad \lambda_1\geq \lambda_2.
\end{equation*}
\end{proposition}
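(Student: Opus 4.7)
The plan is to use Picard iteration applied to the mild formulation \eqref{mild:dirichlet}. Define the iterates
\begin{equation*}
u^{(0,\lambda)}_t(x) := (\sG_Du)_t(x), \quad u^{(n+1,\lambda)}_t(x) := (\sG_Du)_t(x) + \lambda \int_0^t\!\!\int_0^L p_D(t-s,x,y)\sigma(u^{(n,\lambda)}_s(y))\,W(\d s,\d y).
\end{equation*}
By standard Walsh theory, $u^{(n,\lambda)}_t(x) \to u_t(x)$ in $L^p(\Omega)$ uniformly on compact time intervals, so it suffices to show, by induction on $n$, that $\lambda \mapsto \E|u^{(n,\lambda)}_t(x)|^p$ is non-decreasing on $[0,\infty)$ for every $n\geq 0$, and then pass to the limit.

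The base case $n=0$ is immediate because $u^{(0,\lambda)}$ is independent of $\lambda$. For the inductive step at $p=2$, the Walsh isometry yields the exact identity
\begin{equation*}
\E|u^{(n+1,\lambda)}_t(x)|^2 = |(\sG_Du)_t(x)|^2 + \lambda^2 \int_0^t\!\!\int_0^L p_D^2(t-s,x,y)\,\E|\sigma(u^{(n,\lambda)}_s(y))|^2\,\d y\,\d s,
\end{equation*}
so that monotonicity of $\E|u^{(n+1,\lambda)}_t(x)|^2$ in $\lambda$ reduces to verifying that $\lambda \mapsto \E|\sigma(u^{(n,\lambda)}_s(y))|^2$ is also non-decreasing, combined with the trivial monotonicity of the factor $\lambda^2$. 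For general $p\geq 2$ the Walsh isometry is replaced by the Burkholder--Davis--Gundy inequality, together with Minkowski's inequality applied inside the stochastic integral, which produces an analogous control of $\E|u^{(n+1,\lambda)}_t(x)|^p$ by a space-time integral of $\lambda^p\,\E|\sigma(u^{(n,\lambda)}_s(y))|^p$; again monotonicity of $\lambda\mapsto\E|\sigma(u^{(n,\lambda)}_s(y))|^p$ is what is needed to close the recursion.

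The main obstacle is precisely this: transferring monotonicity of $\E|u^{(n,\lambda)}|^p$ through the nonlinearity $\sigma$, because $\E|\sigma(u)|^p$ is a functional of the full law of $u$ rather than of its $L^p$-norm alone. I would overcome this by strengthening the inductive hypothesis to a stochastic monotonicity of the iterates themselves, established via a coupling in which all $(u^{(n,\lambda)})_{\lambda\geq 0}$ are driven by the same white noise $W$. With the noise held fixed, the stochastic integral $\int p_D\,\sigma(u^{(n,\lambda)})\,W(\d s,\d y)$ depends on $\lambda$ only through its integrand, which inherits the coupled monotonicity by the inductive hypothesis, and this is enough to transfer the strengthened monotonicity to the $(n+1)$-th iterate and, in particular, to $\E|\sigma(u^{(n+1,\lambda)})|^p$. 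Passing to the limit $n\to\infty$ in $L^p(\Omega)$ then yields the claimed monotonicity for the true solution.
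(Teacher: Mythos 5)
Your reduction to the Picard iterates is fine as far as it goes (the iterates converge in $L^p(\Omega)$ and the case $n=0$ is trivial), but the step you offer to get around what you call the main obstacle is exactly where the argument breaks. A Walsh integral against space--time white noise is not a monotone functional of its integrand: $W$ is a signed, mean-zero measure, so even if you couple all the iterates to the same noise and know pathwise that $0\le \sigma(u^{(n,\lambda_2)}_s(y))\le \sigma(u^{(n,\lambda_1)}_s(y))$, you cannot conclude any pathwise --- or even distributional --- ordering of $\lambda_2\int_0^t\int_0^L p_D\,\sigma(u^{(n,\lambda_2)})\,W(\d s,\d y)$ versus $\lambda_1\int_0^t\int_0^L p_D\,\sigma(u^{(n,\lambda_1)})\,W(\d s,\d y)$; comparison principles for such equations hold for ordered initial data or ordered drifts, not for ordered noise coefficients, so the strengthened inductive hypothesis you propose is simply not available. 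Two further points would block the induction even if some ordering of the iterates were granted: the standing assumption is only $l_\sigma|x|\le|\sigma(x)|\le L_\sigma|x|$, so $|\sigma|$ need not be monotone in $|x|$ and an ordering of $|u^{(n,\lambda)}|$ does not transfer to an ordering of $\E|\sigma(u^{(n,\lambda)})|^p$; and for $p>2$ the Burkholder--Davis--Gundy estimate is a one-sided inequality, so it can bound $\E|u^{(n+1,\lambda)}_t(x)|^p$ from above but can never compare two values of $\lambda$.

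This difficulty is genuine, and it is the reason the paper does not argue through the mild formulation at all. Instead it discretises the equation in space, following \cite{JKM}: at the grid points $Lk/n$ one obtains a finite system of interacting SDEs driven by independent Brownian motions, for which Theorem 1 of \cite{cox} supplies the (nontrivial) fact that the $p$th moments are non-decreasing in the coefficient multiplying the noise; the monotonicity of $\E|u_t(x)|^p$ then follows by passing to the limit, using the $L^p(\Omega)$ convergence of the discretised processes. To make your plan work you would in effect have to reprove such a moment-comparison theorem for the iterates, and that is not a consequence of a same-noise coupling.
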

\begin{proof}
We follow the ideas of \cite{JKM} and discretise the equation as follows. For $n\geq 2$ and $k=1,2,..., n-1$, we set
\begin{equation*}
\d u_t\left(\frac{Lk}{n}\right)=\frac{1}{2}\Delta_nu_t\left(\frac{Lk}{n}\right) \d t+ \lambda \sqrt{N}
\sigma\left(u_t\left(\frac{Lk}{n} \right)\right)\,\d W_k(t), 
\end{equation*}
with $u_t\left(\frac{0}{n}\right)=0$ and $u_t\left(\frac{Ln}{n}\right)=0$ with $t>0$. The initial condition is given by $u_0\left(\frac{Lk}{n}\right)$. $W_k$ is an independent system of standard Brownian motions and 
\begin{equation*}
\Delta_nu_t\left(\frac{Lk}{n}\right):=n^2\left[u_t\left(\frac{L(k+1)}{n}\right)-2u_t\left(\frac{Lk}{n}\right)+u_t\left(\frac{L(k-1)}{n}\right)\right].
\end{equation*}
The above defines a discretised version of equation \eqref{eq:dirichlet} at points $\{\frac{Lk}{n}\}_{k=2}^{n}$.  For points $x\in (\frac{L(k-1)}{n}, \frac{Lk}{n})$, we consider a linear combinations of the discretised equation at the endpoints of the interval.  We denote the resulting process as $u^{(n)}_t(x)$. It is known that for all $t\geq 0$ and $x\in (0,\,L)$ this process converges to $u_t(x)$ in $L^p(\Omega)$ for all $p\geq2$; see for instance \cite{JKM} and references therein. On the other hand, by Theorem 1 of \cite{cox}, we have that the $p$th moment of $u_t^{n}(x)$ increases with $\lambda$. So by a limiting argument, the result follows. 
\end{proof}

{\it Proof of Corollary \ref{zero}.}
The fact that $\lambda_0\leq \lambda_1$ is obvious because otherwise, we would end up with a contradiction.
From Proposition \ref{monotone} we obtain the existence of two points in the interval $[\lambda_0, \lambda_1]$ such that the limits described in the corollary are zero. 
\qed
\vskip12pt
{\it Proof of Corollary \ref{cor:dirichlet}.} It suffices to use the fact that
$$
(L-2\epsilon)\inf_{x \in (\epsilon, L-\epsilon) }\E \vert u_t(x) \vert^2 \leq \int_0^L \E \vert u_t(x) \vert^2 \,\d x\leq L \sup_{x \in (0,L)}\E \vert u_t(x) \vert^2 ,
$$
together with Theorem \ref{dirichlet} and the definition of $\mathcal{E}_t(\lambda)$.
\qed
\vskip12pt
{\it Proof of Theorem \ref{dirichlet-drift}.}
The proof is very similar to that of Theorem \ref{dirichlet}, so we just indicate the difference.  As before, we use Walsh's approach to define the mild solution as the solution to the following integral equation,
\begin{equation*}
u_t(x)=
(\sG^*_Du)_t(x)+ \lambda \int_0^L\int_0^t p^*_D(t-s,\,x,\,y)\sigma(u_s(y))W(\d s,\,\d y),
\end{equation*}
where $p^*_D(t,x,y)=e^{\mu t}p_D(t,x,y)$, and
\begin{equation*}
(\sG^*_D u)_t(x):=\int_0^L u_0(y)p^*_D(t,\,x,\,y)\,\d y.
\end{equation*}
Let $\mu\in \R$ be fixed. Using the methods as in the proofs of Lemmas \ref{upperbound-1} and \ref{upperbound-2}, we have that for all $\beta>0$ such that $0<\beta+\mu<\mu_1$,
\begin{equation*}
\int_0^\infty e^{\beta t}p^*_D(t,x,x)\, \d t\leq c_1[\frac{1}{\sqrt{\beta+\mu}}+\frac{1}{\mu_1-(\beta+\mu)}],
\end{equation*}
and 
\begin{equation*}
\sup_{t>0}\int_0^L e^{\beta t} p^*_D(t,\,x,\,y)\,d y\leq c_2,
\end{equation*}
where $c_1$ and $c_2$ are positive constants depending on $L$ and $\mu$.  The proof of upper bound now follows exactly as that of Theorem \ref{dirichlet}.  For the lower bound, we follow the proof of Lemma \ref{lowerbound} to obtain that if $\beta+2\mu_1>2\mu$, then for $x,\,y \in[\ve,\,L-\ve]$, we have 
\begin{equation*}
\int_0^\infty e^{-\beta t} (p^*_D(t,\,x,\,y))^2\, \d t \geq \frac{c_1 e^{-(\beta-2\mu+2\mu_1)t_0}}{\beta-2\mu+2\mu_1}.
\end{equation*}
Now the rest of the proof follows from the second part of Theorem \ref{dirichlet}.  For the proof of the last statement, it suffices to look at the dependence on $\mu$ of $\lambda_0$ and $\lambda_1$.
\qed

\section{Proof of Theorem \ref{neumann} and Corollary \ref{cor:neumann}.}

{\it Proof of Theorem \ref{neumann}.}
We start with the mild formulation of the solution, take second moment and use the growth conditions on $\sigma$ to obtain 
\begin{equation*}
\begin{aligned}
\E|u_t(x)|^2&=|(\sG_Nu)_t(x)|^2+\lambda^2\int_0^t\int_0^L p_{N}^2(t-s,\,x,\,y)\E|\sigma(u_s(y))|^2\,\d y\,\d s\\
&\geq |(\sG_Nu)_t(x)|^2+\lambda^2l_\sigma^2\int_0^t\int_0^L p_{N}^2(t-s,\,x,\,y)\E|u_s(y)|^2\,\d y\,\d s\\
&=I_1+I_2.
\end{aligned}
\end{equation*}
We find a lower bound on $I_1$ first. We now use Lemma \ref{neu:lower} and the condition on $u_0$ to obtain the following
\begin{equation*}
I_1=\left|\int_0^L p_N(t,\,x,\,y)u_0(y)\,\d y \right|^2\geq c_1 \left|\int_0^Lu_0(y)\,\d y \right|^2\geq c_2.
\end{equation*}
We now look at the second term $I_2$.
\begin{equation*}
\begin{aligned}
I_2&\geq \lambda^2 l_\sigma^2\int_0^t\inf_{y\in[0,\,L]}\E|u_s(y)|^2 p_N(t-s,\,y,\,y)\, \d s\\
&\geq c_3\lambda^2 l_\sigma^2\int_0^t\frac{1}{\sqrt{t-s}}\inf_{y\in[0,\,L]}\E|u_s(y)|^2\, \d s.
\end{aligned}
\end{equation*}
Combining the above estimates, we obtain 
\begin{equation*}
\inf_{x\in [0,\,L]}\E|u_t(x)|^2\geq c_3+c_4\lambda^2 l_\sigma^2\int_0^t\frac{1}{\sqrt{t-s}}\inf_{y\in [0,\,L]}\E|u_s(y)|^2\, \d s.
\end{equation*}
We now use Proposition \ref{renew-lowerbound} to conclude the first part of the result.
Similar ideas to those used in \cite{Khoshnevisan:2013ab} can be used to show that 
\begin{equation*}
\liminf_{t\rightarrow \infty}\frac{1}{t}\log \E|u_t(x)|^2< \infty \quad\text{for\,all}\quad \lambda>0.
\end{equation*}
We leave it to the reader to fill in the details.
\qed
\vskip12pt
{\it Proof of Corollary \ref{cor:neumann}.}
The proof is quite straightforward, since we have 
\begin{equation*}
\begin{aligned}
\int_0^L\E|u_t(x)|^2\,\d x\geq L\inf_{x\in[0,\,L]}\E|u_t(x)|^2.
\end{aligned}
\end{equation*}
The result now follows from Theorem \ref{neumann}.
\qed

\section{Proofs of Theorem \ref{coloured} and Corollary \ref{cor:coloured}}

{\it Proof of Theorem \ref{coloured}.}
The starting point for the proof is the mild formulation given by \eqref{mild:coloured} which after taking the second moment gives 
\begin{equation*}
\begin{aligned}
&\E|u_t(x)|^2=|(\sG_Du)_t(x)|^2 +\lambda^2\int_0^t\int_{\mathcal{B}(0,R)\times\mathcal{B}(0,R) } p_{D}(t-s,\,x,\,y_1)p_{D}(t-s,\,x,\,y_2) \\
&\qquad \qquad \times f(y_1-y_2) \E|\sigma(u_s(y_1))\sigma(u_s(y_2))|\,\d y_1\,\d y_2
\,\d s.
\end{aligned}
\end{equation*}
Using the Lipschitz continuity of $\sigma$, we get that
\begin{equation*} 
\begin{aligned}
\E|u_t(x)|^2&\leq |(\sG_Du)_t(x)|^2 +\lambda^2 L_{\sigma}^2
\int_0^t\int_{\mathcal{B}(0,R) \times \mathcal{B}(0,R) } p_{D}(t-s,\,x,\,y_1)\\
& \qquad \qquad \times p_{D}(t-s,\,x,\,y_2)f(y_1-y_2) \E|u_s(y_1)u_s(y_2)|\,\d y_1\,\d y_2
\,\d s \\
& =I_1+ I_2.
\end{aligned}
\end{equation*}
We now use the fact that $u_0$ is bounded together with Lemma \ref{upperbound-4} to obtain 
$$
I_1 \leq c_1 e^{-\beta t},
$$
whenever $\beta \in (0, 2 \mu_1)$. We have used a similar argument which led to \eqref{i1}.  In order to deal with $I_2$, we set 
\begin{equation*}
\|u\|_{2,\,\beta}:=\sup_{t>0}\sup_{x\in \mathcal{B}(0,R)}e^{\beta t}\E|u_t(x)|^2.
\end{equation*}
and observe that
\begin{equation*}
\begin{aligned}
&I_2\leq\lambda^2 L_{\sigma}^2
\int_0^t\int_{\mathcal{B}(0,R) \times \mathcal{B}(0,R) } p_{D}(t-s,\,x,\,y_1) p_{D}(t-s,\,x,\,y_2)f(y_1-y_2) \,\d y_1\,\d y_2\\
& \qquad \qquad \times e^{\beta(t-s)}e^{-\beta(t-s)} \sup_{y \in \mathcal{B}(0,R)} \E\vert u_s(y) \vert^2
\,\d s \\
&\leq \|u\|_{2,\,\beta} \lambda^2 L_{\sigma}^2e^{-\beta t}
\int_0^t  \int_{\mathcal{B}(0,R) \times \mathcal{B}(0,R) }   p_{D}(s,\,x,\,y_1) p_{D}(s,\,x,\,y_2)f(y_1-y_2) \,\d y_1\,\d y_2\\
&\qquad \qquad \times  e^{\beta s} \,\d s.
\end{aligned}
\end{equation*}
Next, we see that 
\begin{equation*}
\begin{aligned}
\int_{\mathcal{B}(0,R) \times \mathcal{B}(0,R) }&p_{D}(s,\,x,\,y_1) p_{D}(s,\,x,\,y_2)f(y_1-y_2) \,\d y_1\,\d y_2\\
&\leq \sup_{y_1,\,y_2\in \mathcal{B}(0,R)}p_{D}(s,\,x,\,y_1) p_{D}(s,\,x,\,y_2) \\
&\times \int_{\mathcal{B}(0,R) \times \mathcal{B}(0,R) }f(y_1-y_2)\,\d y_1\,\d y_2.
\end{aligned}
\end{equation*}
We now use the above, Lemma \ref{upperbound-3} and the fact that $$\int_{\mathcal{B}(0,R) \times \mathcal{B}(0,R)} f(y_1-y_2) dy_1 \, dy_2 < \infty,$$ to conclude that
\begin{equation*}
I_2 \leq c_2\tilde{K}_{\beta,\,\mu_1}\|u\|_{2,\,\beta}\lambda^2L_\sigma^2e^{-\beta t},
\end{equation*}
where $\tilde{K}_{\beta,\,\mu_1}:=[\frac{1}{\sqrt{\beta}}+\frac{1}{2\mu_1-\beta}]$.  Combining the above estimates, we obtain
\begin{equation*}
\E|u_t(x)|^2\leq c_1 e^{-\beta t}+c_2\tilde{K}_{\beta,\,\mu_1}\|u\|_{2,\,\beta}\lambda^2L_\sigma^2e^{-\beta t},
\end{equation*}
which yields
\begin{equation*}
\|u\|_{2,\,\beta}\leq c_1+c_2\tilde{K}_{\beta,\,\mu_1}\|u\|_{2,\,\beta}\lambda^2L_\sigma^2.
\end{equation*}
Therefore if $\lambda$ is chosen such that $c_2 \tilde{K}_{\beta,\,\mu_1}\lambda^2L_\sigma^2<1$, we will have $\|u\|_{2,\,\beta}<\infty$, which shows that 
\begin{equation*}
\limsup_{t\rightarrow \infty}\frac{1}{t}\log \E|u_t(x)|^2<0.
\end{equation*}
The proof of 
\begin{equation*}
\limsup_{t\rightarrow \infty}\frac{1}{t}\log \E|u_t(x)|^2>-\infty
\end{equation*}
follows from the fact the first term $|(\sG_Du)_t(x)|^2$ decays exponentially fast and that $\E|u_t(x)|^2\geq |(\sG_Du)_t(x)|^2$.  We next show that for $\lambda$ large enough, 
\begin{equation*}
\liminf_{t\rightarrow \infty}\frac{1}{t}\log \E|u_t(x)|^2>0.
\end{equation*}
Instead of  looking at the second moment directly, we will look at an auxiliary quantity $J_\beta$ defined as follows.
Set
\begin{equation*} 
J_{\beta}:=\int_0^{\infty} e^{-\beta s}
\inf_{y_1,y_2 \in  \mathcal{B}(0,R-\ve)}\E|u_s(y_1)u_s(y_2)|  \d s.
\end{equation*}
We claim that $J_{\beta}=\infty$ for some $\beta>0$.  To prove this claim, we start with
\begin{equation} \label{aux3}
\begin{aligned}
\E(u_t(x_1)&u_t(x_2))=(\sG_Du)_t(x_1)(\sG_Du)_t(x_2)+\lambda^2\int_0^t\int_{\mathcal{B}(0,R)^2} p_{D}(t-s,\,x_1,\,y_1)\\
&\times p_{D}(t-s,\,x_2,\,y_2) f(y_1-y_2) \E\left(\sigma(u_s(y_1))\sigma(u_s(y_2))\right)\d y_1\,\d y_2 \,\d s. 
\end{aligned}
\end{equation}
Using the lower bound for $\sigma$, we see that the second term of the above display is bounded below by 
\begin{equation*}
\begin{aligned}
\lambda^2 l_{\sigma}^2 &V^2_{R-\epsilon} \int_0^t  \inf_{y_1,y_2 \in  \mathcal{B}(0,R-\ve)}  \E|u_s(y_1)u_s(y_2)|  \\
 & \times \inf_{y_1,y_2 \in  \mathcal{B}(0,R-\ve)}  p_{D}(t-s,\,x_1,\,y_1) p_{D}(t-s,\,x_2,\,y_2) \,\d s \\
 &\times \inf_{y_1,y_2 \in  \mathcal{B}(0,R)} f(y_1-y_2),
\end{aligned}
\end{equation*}
where $V_{R-\ve}$ denotes the volume of the ball $\mathcal{B}(0,R-\ve)$.  We now take infimum over $\mathcal{B}(0,R-\ve)$ on both sides of \eqref{aux3}, multiply by $e^{-\beta t}$ and then integrate to obtain
\begin{equation*} 
\begin{aligned}
J_\beta &\geq \int_0^{\infty} e^{-\beta s}
\inf_{x_1,x_2 \in  \mathcal{B}(0,R-\ve)} (\sG_Du)_t(x_1)(\sG_Du)_t(x_2) \d s 
 \\
 & \qquad +\lambda^2 l_\sigma^2 V^2_{R-\ve} J_{\beta} \int_0^{\infty} e^{-\beta t} \inf_{x, y_1,y_2 \in  \mathcal{B}(0,R-\ve)}p_{D}(t,\,x,\,y_1)
p_{D}(t,\,x,\,y_2)\d t,
\end{aligned}
\end{equation*}
where we have used the fact that
\begin{equation} \label{f}
 \inf_{y_1,y_2 \in  \mathcal{B}(0,R)} f(y_1-y_2)>0.
 \end{equation}
 
As before, we apply Lemma \ref{lowerbound-1} to get that
\begin{equation*} 
J_\beta \geq \frac{c_5 e^{-(\beta+2\mu_1)t_0}}{\beta+2\mu_1}+\lambda^2 l_\sigma^2 V^2_{R-\ve} J_{\beta} \frac{c_6 e^{-(\beta+2\mu_1)t_0}}{\beta+2\mu_1}.
\end{equation*}
We now choose $\lambda$ so that $\frac{c_6 \lambda^2 l_\sigma^2 V^2_{R-\ve} e^{-(\beta+2\mu_1)t_0}}{\beta+2\mu_1}>1$ which implies that $J_{\beta}=\infty$.  We now show that this fact implies our result. 
We have 
\begin{equation} \label{aux1}
\begin{aligned}
\E|u_t(x)|^2&\geq |(\sG_Du)_t(x)|^2 +\lambda^2 l_{\sigma}^2
\int_0^t\int_{\mathcal{B}(0,R) \times \mathcal{B}(0,R) } p_{D}(t-s,\,x,\,y_1)\\
& \qquad  \times p_{D}(t-s,\,x,\,y_2)f(y_1-y_2) \E|u_s(y_1)u_s(y_2)|\,\d y_1\,\d y_2
\,\d s \\
& =\tilde{I}_1+ \tilde{I}_2.
\end{aligned}
\end{equation}
Fix $\ve>0$.  Note that by (\ref{f}), we have 
\begin{equation*}
\begin{aligned}
\tilde{I}_2 &\geq c_2\lambda^2 l_\sigma^2 \int_0^t\inf_{y_1,y_2 \in  \mathcal{B}(0,R-\ve)}\E|u_s(y_1)u_s(y_2)|
\int_{\mathcal{B}(0,R-\ve) \times \mathcal{B}(0,R-\ve) } p_{D}(t-s,\,x,\,y_1)\\
& \qquad \qquad \times p_{D}(t-s,\,x,\,y_2) \,\d y_1\,\d y_2
\,\d s \\
&\geq c_2\lambda^2 l_\sigma^2 V^2_{R-\ve} \int_0^t
\inf_{y_1,y_2 \in  \mathcal{B}(0,R-\ve)}\E|u_s(y_1)u_s(y_2)| \\
&\qquad \qquad \times \inf_{y_1,y_2 \in  \mathcal{B}(0,R-\ve)}p_{D}(t-s,\,x,\,y_1)
p_{D}(t-s,\,x,\,y_2)\d s.\\
\end{aligned}
\end{equation*}
  We now take infimum over $\mathcal{B}(0,R-\ve)$ on both sides of \eqref{aux1}, multiply by $e^{-\beta t}$ and then integrate to obtain
\begin{equation*} 
\begin{aligned}
I_\beta &\geq \int_0^\infty e^{-\beta t}\inf_{x\in\mathcal{B}(0,R-\ve)}|(\sG_Du)_t(x)|^2 \,\d t
 \\
 & \qquad +c_2\lambda^2 l_\sigma^2 V^2_{R-\ve} \int_0^{\infty} e^{-\beta s}
\inf_{y_1,y_2 \in  \mathcal{B}(0,R-\ve)}\E|u_s(y_1)u_s(y_2)| \d s\\
&\qquad \qquad \times \int_0^{\infty} e^{-\beta t} \inf_{x, y_1,y_2 \in  \mathcal{B}(0,R-\ve)}p_{D}(t,\,x,\,y_1)
p_{D}(t,\,x,\,y_2) \d t,
\end{aligned}
\end{equation*}
where we have used the notation 
\begin{equation*}
I_\beta:=\int_0^\infty e^{-\beta t}\inf_{x\in \mathcal{B}(0,R-\ve)}\E|u_t(x)|^2\,\d t.
\end{equation*}

We now appeal to Lemma \ref{lowerbound-1} and use similar computations to the ones that lead to (\ref{aux2}), to get that
\begin{equation} \label{aux4}
\begin{aligned}
I_\beta &\geq  \frac{c_3 e^{-(\beta+2\mu_1)t_0}}{\beta+2\mu_1}+\lambda^2 l_\sigma^2 V_{R-\ve} \int_0^{\infty} e^{-\beta s}
\inf_{y_1,y_2 \in  \mathcal{B}(0,R-\ve)}\E|u_s(y_1)u_s(y_2)| \d s\\
&\qquad \qquad \times  \frac{c_4 e^{-(\beta+2\mu_1)t_0}}{\beta+2\mu_1}.
\end{aligned}
\end{equation}
Using the definition of $J_{\beta}$, we have 
\begin{equation} 
\begin{aligned}
I_\beta &\geq  \frac{c_3 e^{-(\beta+2\mu_1)t_0}}{\beta+2\mu_1}+\lambda^2 l_\sigma^2 V_{R-\ve} J_{\beta}\frac{c_4 e^{-(\beta+2\mu_1)t_0}}{\beta+2\mu_1}.
\end{aligned}
\end{equation}
This proves the result since $J_\beta=\infty$ for $\lambda$ large enough.  The proof of the theorem will be complete once we have that
\begin{equation*}
\liminf_{t\rightarrow \infty}\frac{1}{t}\log \E|u_t(x)|^2<\infty.
\end{equation*}
But this follows from ideas from \cite{FK2}. We leave it to the reader to fill in the details.
\qed
\vskip12pt
{\it Proof of Corollary \ref{cor:coloured}.} It suffices to use the fact that
$$
V_{R-\epsilon}\inf_{x \in \mathcal{B}(0, R-\epsilon)}\E \vert u_t(x) \vert^2 \leq \int_{\mathcal{B}(0,R)} \E \vert u_t(x) \vert^2 \d x \leq 
V_R \sup_{x \in \mathcal{B}(0,R)}\E \vert u_t(x) \vert^2 ,
$$
together with Theorem \ref{coloured} and the definition of $\mathcal{E}_t(\lambda)$.
\qed

\section{An extension}
All of the main results described in the introduction involve the second moment. In this section, we show how one can get analogous results for higher moments. We will focus only on the solution to \eqref{dirichlet}. Analogous results for the other equations will follow from similar ideas. 
\begin{theorem}\label{dirichlet_higher}
Let $u_t$ be the unique solution to \eqref{dirichlet}. Then for all $p \geq 2$, there exists $\lambda_0(p)>0$ such that for all $\lambda<\lambda_0(p)$ and $x \in (0,L)$, 
\begin{equation*}
-\infty<\limsup_{t\rightarrow \infty}\frac{1}{t}\log \E|u_t(x)|^p<0.
\end{equation*}
On the other hand, for all $\ve>0$, there exists $\lambda_1(p)>0$ such that for all $\lambda>\lambda_1(p)$, we have 
\begin{equation*}
0<\liminf_{t\rightarrow \infty}\frac{1}{t}\log \E|u_t(x)|^p<\infty,
\end{equation*}
whenever $x\in [\ve,L-\ve]$.  Moreover, $\lambda_0(p)$ decreases with $p$.
\end{theorem}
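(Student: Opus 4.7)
The strategy is to mirror the proof of Theorem \ref{dirichlet}, replacing the $L^2$ isometry for the Walsh stochastic integral with the Burkholder--Davis--Gundy inequality. Starting from the mild formulation \eqref{mild:dirichlet}, taking $p$-th moments, and applying BDG gives
\begin{equation*}
\E|u_t(x)|^p \leq c_p\, |(\sG_Du)_t(x)|^p + c_p\lambda^p\,\E\!\left(\int_0^t\!\int_0^L p_D^2(t-s,x,y)\,\sigma^2(u_s(y))\,\d y\,\d s\right)^{\!p/2}.
\end{equation*}
Minkowski's integral inequality applied to the $L^{p/2}(\Omega)$-norm, together with $|\sigma(u)|\leq L_\sigma|u|$, produces a closed bound on the quantity $(\E|u_t(x)|^p)^{2/p}$ driven by itself through the kernel $p_D^2$.

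Introduce the weighted sup-norm $\|u\|_{p,\beta}^p:=\sup_{t>0,\,x\in(0,L)}e^{\beta t}\E|u_t(x)|^p$ with $\beta\in(0,p\mu_1)$. Using the identity $\int_0^L p_D^2(t,x,y)\,\d y=p_D(2t,x,x)$ together with Lemmas \ref{upperbound-1} and \ref{upperbound-2} applied with the rescaled parameter $\beta/p$, everything reduces to an inequality of the form
\begin{equation*}
\|u\|_{p,\beta}^2 \leq c_1 + c_p\,\lambda^2 L_\sigma^2\,K_{\beta,p,\mu_1}\,\|u\|_{p,\beta}^2.
\end{equation*}
Choosing $\lambda<\lambda_0(p)$ so that $c_p\lambda^2 L_\sigma^2 K_{\beta,p,\mu_1}<1$ yields $\|u\|_{p,\beta}^p<\infty$, hence $\limsup\frac{1}{t}\log\E|u_t(x)|^p\leq-\beta<0$. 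The matching lower control $\limsup>-\infty$ follows from Jensen applied to $z\mapsto|z|^p$: $\E|u_t(x)|^p\geq|\E u_t(x)|^p=|(\sG_Du)_t(x)|^p\geq c\,e^{-p\mu_1 t}$, using the interior two-sided bound on $p_D$.

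For the growth statements, the lower bound $\liminf\frac{1}{t}\log\E|u_t(x)|^p>0$ is immediate from Lyapunov's inequality $(\E|u_t(x)|^p)^{1/p}\geq(\E|u_t(x)|^2)^{1/2}$ combined with Theorem \ref{dirichlet}; thus we may take $\lambda_1(p):=\lambda_1$ independently of $p$. The matching upper control $\liminf<\infty$ comes from a second BDG+Minkowski estimate, now using only the trivial bound $\int_0^L p_D^2(t,x,y)\,\d y\leq c/\sqrt{t}$ arising from \eqref{p1}, which yields a singular Volterra inequality on $M(t):=\sup_{x}(\E|u_t(x)|^p)^{2/p}$; an iteration in the spirit of Proposition \ref{renew-lowerbound} then delivers $M(t)\leq Ce^{Ct}$. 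Monotonicity of $\lambda_0(p)$ is a soft consequence of Lyapunov: for $p\leq q$ one has $\E|u_t(x)|^p\leq(\E|u_t(x)|^q)^{p/q}$, so exponential decay of the $q$-th moment forces exponential decay of the $p$-th, whence $\lambda_0(p)\geq\lambda_0(q)$.

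The principal obstacle is the careful bookkeeping in the BDG+Minkowski step: the BDG constant $c_p$ grows with $p$ and enters multiplicatively in the self-bound, while the rescaling $\beta\mapsto\beta/p$ in the heat-kernel estimates enters $K_{\beta,p,\mu_1}$, so the admissible $\lambda_0(p)$ depends on $p$ in a nontrivial way. Everything else is a direct adaptation of the second-moment arguments already developed in Sections 2 and 3.
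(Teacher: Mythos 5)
Your proposal is correct and follows essentially the same route as the paper: Burkh\"older/BDG plus a weighted norm of the type $\sup_{t,x}e^{\beta t}\E|u_t(x)|^p$, the identity $\int_0^L p_D^2(t,x,y)\,\d y=p_D(2t,x,x)$ with Lemmas \ref{upperbound-1} and \ref{upperbound-2} for the decay regime, and Jensen's inequality $\E|u_t(x)|^p\geq(\E|u_t(x)|^2)^{p/2}$ combined with Theorem \ref{dirichlet} for the growth regime, with the $\liminf<\infty$ bound obtained by the standard singular Volterra/renewal iteration as in \cite{FK}; your explicit Minkowski step in $L^{p/2}(\Omega)$ is in fact the careful justification of the paper's displayed Burkh\"older bound. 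The only divergence is the monotonicity of $\lambda_0(p)$, which the paper obtains by tracking that its constant grows with $p$, while you derive it softly from $\E|u_t(x)|^p\leq(\E|u_t(x)|^q)^{p/q}$ for $p\leq q$; both arguments are valid.
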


\begin{proof}
Choose $\beta \in  (0,2\mu_1)$, and let
\begin{equation*}
\|u\|_{p,\,\beta}:=\sup_{t>0}\sup_{x\in \R}e^{p\beta t/2}\E|u_t(x)|^p.
\end{equation*}
Since $\beta>0$, the first part of the theorem will be proved once we show that $\|u\|_{p,\,\beta}<\infty$.  Using this notation, Burkh\"older's inequality together with the condition on $\sigma$, we have 
\begin{equation*}
\begin{aligned}
\E|u_t(x)|^p&\leq c_p\left\{|(\sG_Du)_t(x)|^p+\left(\lambda^2 L_\sigma^2\int_0^t\int_0^L p_{D}^2(t-s,\,x,\,y)\E|u_s(y)|^2\,\d y\,\d s\right)^{p/2}\right\}\\
&=I_1+I_2.
\end{aligned}
\end{equation*}
We bound the term $I_1$ first. Using the fact $u_0(x)$ is bounded, we have
\begin{equation*} 
\begin{aligned}
I_1&\leq c_1e^{-p\beta t/2}\left(\int_0^Le^{\beta t/2 }p_D(t,\,x,\,y)\,\d y\right)^p\\
&\leq c_2 e^{-p\beta t/2},
\end{aligned}
\end{equation*}
where we have use Lemma \ref{upperbound-2}  for the last inequality. We now turn our attention to the second term. 
Using the fact that $\int_0^L p^2_D(t,x,y) dy=p_D(2t,x,x)$, we get that
\begin{equation*}
\begin{aligned}
I_2&= \left(\lambda^2L_\sigma^2\int_0^t\int_0^L e^{\beta(t-s)}p_{D}^2(t-s,\,x,\,y)e^{-\beta(t-s)}\E|u_s(y)|^2\,\d y\,\d s\right)^{p/2}\\
&\leq \|u\|_{p,\,\beta} \left(\lambda^2L_\sigma^2 e^{-\beta t}\int_0^t\int_0^L e^{\beta s}p_{D}^2(s,\,x,\,y)\,\d y\,\d s\right)^{p/2}\\
&\leq \|u\|_{p,\,\beta} \left(\lambda^2L_\sigma^2e^{-\beta t}\int_0^\infty e^{\beta s}p_{D}(2s,\,x,\,x)\,\d s\right)^{p/2}\\
&\leq c_3K^{p/2}_{\beta, \mu_1}\|u\|_{p,\,\beta} \lambda^pL_\sigma^pe^{-p\beta t/2},
\end{aligned}
\end{equation*}
where we have use Lemma \ref{upperbound-1} for the last inequality.  Combining the above estimates, we obtain
\begin{equation*}
\E|u_t(x)|^p\leq c_2 e^{-p\beta t/2}+ c_3K^{p/2}_{\beta, \mu_1}\|u\|_{p,\,\beta} \lambda^pL_\sigma^pe^{-p\beta t/2},
\end{equation*}
which immediately yields
\begin{equation*}
\|u\|_{p,\,\beta}\leq c_2+c_3K^{p/2}_{\beta, \mu_1}\|u\|_{p,\,\beta}\lambda^pL_\sigma^p.
\end{equation*}
Therefore if $c_3K^{p/2}_{\beta, \mu_1}\lambda^pL_\sigma^p<1$, then we have $\|u\|_{p,\,\beta}<\infty$. By keeping track of the dependence on $p$, we see that $c_{3}$ increases with $p$. Therefore, $\lambda$ decreases with $p$. The proof of 
\begin{equation*}
\limsup_{t\rightarrow \infty}\frac{1}{t}\log \E|u_t(x)|^p>-\infty
\end{equation*}
is the same as in the special case of $p=2$.  We omit it here.  By Cauchy-Schwarz inequality, we have
$$
\E|u_t(x)|^p \geq (\E|u_t(x)|^2)^{p/2}.
$$
This implies that $\E|u_t(x)|^p$ grows exponentially whenever $\E|u_t(x)|^2$ does. The proof of
\begin{equation*}
\liminf_{t\rightarrow \infty}\frac{1}{t}\log \E|u_t(x)|^p<\infty,
\end{equation*}
follows from the ideas of \cite{FK}. 
\end{proof}

The above theorem deals with \eqref{eq:dirichlet} only.  But it is clear from the proof, one can also obtain similar results for \eqref{eq:neumann} and \eqref{eq:dir}. We leave this to the reader.

\section{Appendix}

As mentioned in the introduction, we settle the issue of existence-uniqueness of \eqref{eq:dir} here.
We begin with the following lemma. Recall that $p_D(t,\,x,\,y)$ denotes the Dirichlet heat kernel.

\begin{lemma} \label{lem:preli_ex}
Suppose that for any $\epsilon>0$, we have 
\begin{equation} \label{a1}
\int_{\vert x \vert \leq \epsilon} f(x) \log \frac{1}{\vert x \vert} dx < \infty, \quad \text{ if } d=2,
\end{equation}
or
\begin{equation} \label{a2}
\int_{\vert x \vert \leq \epsilon} f(x) \frac{1}{\vert x \vert^{d-2}} dx < \infty, \quad \text{ if } d\geq 3,
\end{equation}
for all $t>0$ and $x \in \mathcal{B}(0,\,R)$,
\begin{equation*}
\int_0^t \int_{\mathcal{B}(0,\,R)\times \mathcal{B}(0,\,R)} p_D(s,\,x,\,y)p_D(s,\,x,\,z)f(y,\,z)\,\d y\,\d z\,\d s\leq c,
\end{equation*}
where $c$ is some positive constant.
\end{lemma}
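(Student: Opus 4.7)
My plan is to dominate the triple integral by one involving only the free Gaussian heat kernel, reduce it via a Gaussian convolution identity to an integral of $f$ against the truncated Brownian potential kernel, and then apply \eqref{a1}--\eqref{a2} to control the singularity at the origin. Throughout I read $f(y,z)$ as $f(y-z)$, consistent with the setup in the introduction.

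\textbf{Step 1: Pass from Dirichlet to free heat kernel.} Since $p_D(s,x,y)\leq p(s,x,y)$ (the free Gaussian kernel on $\R^d$) and every factor in the integrand is nonnegative, it suffices to bound
$$J(t,x):=\int_0^t \int_{\mathcal{B}(0,R)\times\mathcal{B}(0,R)} p(s,x,y)\,p(s,x,z)\,f(y-z)\,dy\,dz\,ds.$$

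\textbf{Step 2: Gaussian convolution identity.} Change variables $w=y-z$ (so $|w|\leq 2R$) and then enlarge the $y$-integration from $\mathcal{B}(0,R)\cap(\mathcal{B}(0,R)+w)$ to $\R^d$; this is an upper bound since the integrand is nonnegative. Using the elementary identity $|y-x|^2+|y-x-w|^2=2|y-x-w/2|^2+|w|^2/2$ a direct Gaussian computation gives
$$\int_{\R^d} p(s,x,y)\,p(s,x,y-w)\,dy \;=\; p(2s,0,w).$$
Therefore
$$J(t,x)\leq \int_0^t \int_{\mathcal{B}(0,2R)} p(2s,0,w)\,f(w)\,dw\,ds.$$

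\textbf{Step 3: Perform the time integral.} Fubini (all nonnegative) and the substitution $\tau=2s$ yield
$$J(t,x)\leq \tfrac{1}{2}\int_{\mathcal{B}(0,2R)} f(w)\,G_{2t}(w)\,dw,\qquad G_T(w):=\int_0^T p(\tau,0,w)\,d\tau,$$
where $G_T$ is the truncated Brownian potential kernel. Crucially the resulting bound is independent of $x$.

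\textbf{Step 4: Match with the Dalang-type hypotheses.} Standard estimates on $G_T$ give, for $|w|\leq 2R$,
$$G_T(w)\leq c_d|w|^{-(d-2)}\;\;(d\geq 3),\qquad G_T(w)\leq c\bigl(\log(1/|w|)+1\bigr)\;\;(d=2),\qquad G_T(w)\leq c\sqrt{T}\;\;(d=1).$$
Splitting $\mathcal{B}(0,2R)=\{|w|\leq \epsilon\}\cup\{\epsilon<|w|\leq 2R\}$, the first piece is finite by \eqref{a1} in $d=2$ and by \eqref{a2} in $d\geq 3$ (and is trivially finite for $d=1$), while the second piece is finite by local integrability of $f$ on $\mathcal{B}(0,2R)\setminus\{0\}$ and boundedness of $G_{2t}$ away from the origin. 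This yields $J(t,x)\leq c$, completing the proof.

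\textbf{Main obstacle.} The only non-routine point is Step~2: verifying the convolution identity and justifying the enlargement of the $y$-domain in a way that preserves the singularity structure of $f$ at $0$. Once that reduction is in place, everything else is a direct matching of the singular behaviour of the potential kernel $G_{T}$ with the weights $\log(1/|w|)$ and $|w|^{-(d-2)}$ appearing in \eqref{a1}--\eqref{a2}.
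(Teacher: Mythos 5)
Your argument is correct, but it follows a genuinely different route from the paper. The paper's proof is Fourier-analytic: it dominates $p_D$ by the free kernel $p$, enlarges the space-time domain, and bounds the resulting quantity by $c\int_{\R^d}\hat{f}(\xi)(1+|\xi|^2)^{-1}\,\d\xi$, then simply cites the known equivalence (Dalang) between finiteness of this integral and the local conditions \eqref{a1}--\eqref{a2}. You instead stay entirely in real variables: after the same domination $p_D\leq p$, you use the Chapman--Kolmogorov identity $\int_{\R^d}p(s,x,y)p(s,x,y-w)\,\d y=p(2s,0,w)$ to reduce the problem to integrating $f$ against the truncated heat potential $G_{2t}$, whose singularity at the origin ($|w|^{-(d-2)}$ for $d\geq3$, $\log(1/|w|)$ for $d=2$) is exactly what \eqref{a1}--\eqref{a2} control. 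Your version is more self-contained (no Fourier transform, no appeal to the equivalence theorem, no positive-definiteness needed beyond $f\geq0$) and in fact cleaner than the paper's, whose simultaneous extension of the time integral to $(0,\infty)$ and the space integral to $\R^d\times\R^d$ makes the intermediate quantity infinite when $d=2$. One caveat you should make explicit: your bounds $G_T(w)\leq c(\log(1/|w|)+1)$ for $d=2$ and $G_T(w)\leq c\sqrt{T}$ for $d=1$ have constants depending on $T=2t$ (growing like $\log t$ and $\sqrt{t}$ respectively), so your final constant $c$ is uniform in $x$ but, for $d\leq2$, depends on $t$. This is harmless for the way the lemma is used (Gronwall on a fixed interval $[0,T]$), and a genuinely $t$-uniform bound would require retaining the exponential decay of $p_D$ for large times, as in \eqref{p2}, instead of the Gaussian domination there; it is worth a sentence so the reader does not expect uniformity in $t$ from the Gaussian bound alone.
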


\begin{proof}
We begin by noting that $p_D(t,\,x,\,y)\leq p(t,\,x,\,y)$ from which we have 
\begin{equation*}
\begin{aligned}
\int_0^t\int_{\mathcal{B}(0,\,R)\times \mathcal{B}(0,\,R)}&p_D(s,\,x,\,y)p_D(s,\,x,\,z)f(y,\,z)\,\d y\,\d z\,\d s\\
&\leq
\int_0^\infty\int_{\R^d\times \R^d}p(s,\,x,\,y)p(s,\,x,\,z)f(y,\,z)\,\d y\,\d z\,\d s\\
&\leq c_1\int_{\R^d}\frac{\hat{f}(\xi)}{1+|\xi|^2}\,\d \xi,
\end{aligned}
\end{equation*}
where $\hat{f}$ is the Fourier transform of $f$.
But it is known that the right hand side of the above is bounded if and only if the function $f$ satisfies assumption \eqref{a1} and \eqref{a2}; see \cite{Dalang} for details.
\end{proof}
For $d=1$, the quantity 
\begin{equation*}
\int_0^t \int_{\mathcal{B}(0,\,R)\times \mathcal{B}(0,\,R)} p_D(s,\,x,\,y)p_D(s,\,x,\,z)f(y,\,z)\,\d y\,\d z\,\d s
\end{equation*}
is always bounded. 
We can now state and prove the main existence and uniqueness theorem,
which follows from a standard Picard iteration scheme.
\begin{theorem}
Suppose that $f$ satisfies condition \eqref{a1} when $d=2$ and \eqref{a2} when $d\geq 3$ and all the other conditions are as in the introduction of the paper, then \eqref{eq:dir} has a unique random field solution satisfying 
\begin{equation*}
\sup_{0\leq t\leq T,\,x\in \mathcal{B}(0,\,R)}\E|u_t(x)|^p<\infty,
\end{equation*}
for any $p\geq 2$ and $T < \infty$. No integrability condition is needed when $d=1$.
\end{theorem}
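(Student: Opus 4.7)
The plan is to prove existence by a standard Picard iteration and uniqueness by a Gronwall-type argument. I define $u^{(0)}_t(x) := (\sG_D u)_t(x)$ (which is bounded, since $u_0$ is bounded and $p_D$ has total mass at most one) and, for $n \geq 0$,
\begin{equation*}
u^{(n+1)}_t(x) := (\sG_D u)_t(x) + \lambda \int_0^t \int_{\mathcal{B}(0,R)} p_D(t-s,x,y)\,\sigma(u^{(n)}_s(y))\, F(\d s,\d y).
\end{equation*}
The goal is to show that each iterate is a well-defined predictable random field, that the quantity $N_n(t) := \sup_{x \in \mathcal{B}(0,R)} (\E|u^{(n)}_t(x)|^p)^{1/p}$ stays bounded on every compact $[0,T]$, and that $\{u^{(n)}\}$ is Cauchy with respect to this norm; the limit is then the mild solution.

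The heart of the argument is a single $L^p$ estimate. Applying the Burkholder-Davis-Gundy inequality for Walsh's martingale-measure integral and Minkowski's inequality in $L^{p/2}(\Omega)$ with respect to the positive measure $p_D(t-s,x,y) p_D(t-s,x,z) f(y-z)\,\d s\, \d y\, \d z$, followed by Cauchy-Schwarz and the growth bound $|\sigma(u)| \leq L_\sigma |u|$, one obtains
\begin{equation*}
\bigl\|u^{(n+1)}_t(x) - (\sG_D u)_t(x)\bigr\|_p^2 \leq c_p \lambda^2 L_\sigma^2 \int_0^t N_n(s)^2\,\biggl(\int\!\!\int p_D(t-s,x,y_1) p_D(t-s,x,y_2) f(y_1-y_2)\, \d y_1\, \d y_2\biggr)\,\d s.
\end{equation*}
Lemma \ref{lem:preli_ex} bounds the time-integrated kernel factor by a constant that is uniform in $x$. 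The uniform $L^p$ bound $\sup_n \sup_{[0,T]\times \mathcal{B}(0,R)} N_n(t) < \infty$ then follows either by a weighted-norm argument (inspired by the $\|\cdot\|_{p,\beta}$ technique of Section 3 with $\beta$ large) or by iterating the convolution inequality directly on short time intervals and bootstrapping to $[0,T]$.

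For the Cauchy property, the identical estimate applied to the difference $u^{(n+1)} - u^{(n)}$, but using the Lipschitz constant of $\sigma$ in place of its linear growth, yields a contraction-type inequality in the same weighted norm; iteration then shows $\{u^{(n)}\}$ is Cauchy in $L^p$, uniformly in $(t,x) \in [0,T] \times \mathcal{B}(0,R)$. The limit $u$ is the mild solution and inherits the required moment bound. Uniqueness follows by applying exactly the same contraction inequality to the difference of two solutions. The case $d=1$ requires no integrability condition on $f$ because the relevant kernel integral collapses to $\int_0^T \int p_D(s,x,y)^2\,\d y\,\d s$, which is finite from the Gaussian upper bound on $p_D$ without invoking Lemma \ref{lem:preli_ex}.

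The main obstacle will be the rigorous justification of the BDG-plus-Minkowski step above, which requires carefully invoking Walsh's theory (or the equivalent Dalang setup) to express the quadratic variation of the stochastic integral in terms of the spatial correlation $f$ and the Dirichlet kernel $p_D$, and verifying the predictability and $L^{p/2}(\Omega)$ integrability needed for Minkowski's inequality to apply to the integrand $\sigma(u^{(n)}_s(y))\sigma(u^{(n)}_s(z))$. Once this estimate is in hand, the rest of the argument parallels the classical Dalang construction of solutions to SPDEs driven by spatially coloured noise and is essentially routine.
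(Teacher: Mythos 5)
Your proposal is correct and follows essentially the same route as the paper: Picard iteration, a Burkholder/BDG estimate combined with the growth and Lipschitz bounds on $\sigma$, the kernel bound of Lemma \ref{lem:preli_ex} to control the integrated correlation factor, and a Gronwall-type contraction argument giving convergence of the iterates and uniqueness, with the $d=1$ case handled by the Gaussian bound on $p_D$ alone. The only difference is cosmetic: you pass the $L^{p}$ norm inside via Minkowski in $L^{p/2}$ plus Cauchy--Schwarz, whereas the paper uses H\"older's inequality against the measure $p_D\,p_D\,f\,\d y\,\d z\,\d s$, and these yield the same estimate.
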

\begin{proof}
Set $u_t^{(0)}(x):=(\sG_Du)_t(x)$ and for $n\geq 1$, set
\begin{equation*}
u^{(n)}_t(x)=
(\sG_Du)_t(x)+ \lambda\int_0^t \int_{\mathcal{B}(0,R)} p_D(t-s,\,x,\,y)\sigma(u^{(n-1)}_s(y))F(\d s,\,\d y).
\end{equation*}
We have by Burkh\"older's inequality, 
\begin{equation*}
\begin{split}
&\E|u^{(n)}_t(x)-u^{(n-1)}_t(x)|^p\leq c_p\lambda^p\big[ \int_0^t\int_{\mathcal{B}(0,R)\times\mathcal{B}(0,R)} p_D(t-s,\,x,\,y)p_D(t-s,\,x,\,z)\\
&\times \E|\sigma(u^{(n-1)}_s(y))-\sigma(u^{(n-2)}_s(y))||\sigma(u^{(n-1)}_s(z))-\sigma(u^{(n-2)}_s(z))|f(y,\,z)\,\d y\,\d z\,\d s \big]^{p/2},
\end{split}
\end{equation*}
where $c_p$ is some positive constant. Using 
\begin{equation*}
d_n(t,\,x):=u^{(n)}_t(x)-u^{(n-1)}_t(x),
\end{equation*}
and the assumption on $\sigma$, we obtain
\begin{equation*}
\begin{split}
&\E|d_n(t,\,x)|^p\leq c_p\lambda^pL^p_\sigma\big[ \int_0^t\int_{\mathcal{B}(0,R)\times\mathcal{B}(0,R)} p_D(t-s,\,x,\,y)p_D(t-s,\,x,\,z)\\
&\times \sup_{x\in \mathcal{B}(0,R)}\E|d_{n-1}(s,\,x)|^2f(y,\,z)\,\d y\,\d z\,\d s \big]^{p/2}
\end{split}
\end{equation*}
We set
\begin{equation*}
F(t):=\int_0^t\int_{\mathcal{B}(0,R)\times\mathcal{B}(0,R)} p_D(t-s,\,x,\,y)p_D(t-s,\,x,\,z)f(y,\,z)\,\d y\,\d z\,\d s.
\end{equation*}
We now use H\"older's inequality to obtain
\begin{equation*}
\sup_{x\in \mathcal{B}(0,R)}\E|d_n(t,\,x)|^p\leq c_p\lambda^pL^p_\sigma F(t)^{p/2-1}\int_0^t \sup_{x\in \mathcal{B}(0,R)} \E|d_{n-1}(s,\,x)|^p\d s
\end{equation*}
By Lemma \ref{lem:preli_ex}, $F(t)$ is bounded.  We thus obtain  
\begin{equation*}
\sup_{x\in \mathcal{B}(0,R)}\E|d_n(t,\,x)|^p\leq C\int_0^t \sup_{x\in \mathcal{B}(0,R)} \E|d_{n-1}(s,\,x)|^p\d s, 
\end{equation*}
for some constant $C>0$. Now Gronwall's lemma gives convergence of the sequence $u^{(n)}_t(x)$ in the $p$th moment uniformly over $[0,T] \times \mathcal{B}(0,R)$, thus showing existence of a solution. Uniqueness follows from a well known argument. See \cite{minicourse} for details.

\end{proof}

\bibliography{Foon}
\end{document}